\newtheorem{thm}{Theorem}[section]
\newtheorem{cor}{Corollary}[section]
\newtheorem{lem}{Lemma}[section]
\newtheorem{prop}{Proposition}[section]
\theoremstyle{definition}
\newtheorem{exam}{Example}[section]
\theoremstyle{remark}
\newtheorem{rem}{Remark}[section]
\numberwithin{equation}{section}
\begin{document}
\title{Some new concentration inequalities for the It\^o stochastic integral}
\author{Nguyen Tien Dung\thanks{Department of Mathematics, University of Science, Vietnam National University, Hanoi, 334 Nguyen
Trai, Thanh Xuan, Hanoi, 084 Vietnam. Email: dung@hus.edu.vn}}

\date{March 6, 2024}

\maketitle
\begin{abstract} In this paper, based on the techniques of Malliavin calculus, we provide some new concentration inequalities for the running supremum of the It\^o stochastic integral with unbounded integrands. Several applications and examples are provided as well.
\end{abstract}
\noindent\emph{Keywords:} Concentration inequality, It\^o stochastic integral, Malliavin calculus.\\
{\em 2010 Mathematics Subject Classification:} 60E15, 60H07.

\section{Introduction}Let $(B_t)_{t\in [0,T]}$ be a standard Brownian motion defined on a complete probability space $(\Omega,\mathcal{F},\mathbb{F},P),$ where $\mathbb{F}=(\mathcal{F}_t)_{t\in [0,T]}$ is a natural filtration generated $B.$ We consider the It\^o stochastic integrals
$$\int_0^tu_sdB_s,\,\,0\leq t\leq T,$$
where $u=(u_t)_{t\in [0,T]}$ is an $\mathbb{F}$-adapted stochastic process such that $\int_0^TE|u_s|^2ds<\infty.$ The It\^o stochastic integrals are the most fundamental research objects in Stochastic analysis and they, of course, have many fruitful properties. In the present paper, we revisit the concentration property of It\^o stochastic integrals. Let us recall the following classical result: If $u$ is bounded in $L^2[0,T],i.e.$
\begin{equation}\label{igy}
\int_0^Tu_s^2ds\leq M^2\,\,\,a.s.
\end{equation}
for some deterministic constant $M,$ then we have
\begin{equation}\label{vms}
P\left(\sup\limits_{0\leq t\leq T}\int_0^tu_sdB_s>x\right)\leq e^{-\frac{x^2}{2M^2}},\,\,x\geq 0.
\end{equation}
So a natural question arising here is that how to handle the case of unbounded integrands? Surprisingly, to the best of our knowledge, this question is still open. Motivated by this observation, our aim is partially fill up this gap. The key idea is to replace the deterministic constant $M$ by a random variable. More specifically, we replace the condition (\ref{igy}) by a weaker one that reads
\begin{equation}\label{vms1}
\int_0^Tu_t^2dt\leq \int_0^T\bar{u}_t^2dt\,a.s.
\end{equation}
where $\bar{u}=(\bar{u}_t)_{t\in [0,T]}$ is an $\mathbb{F}$-adapted stochastic process satisfying, for each $t\in [0,T],$ $\bar{u}_t$ is a Malliavin differentiable random variable. This assumption allows us to employ the techniques of Malliavin calculus and we obtain new concentration inequalities for a general class of unbounded integrands. Our abstract results will be stated in Theorems \ref{yyj} and \ref{yyj2s}. A special consequence of these results is the following, see Corollary \ref{fdl} below: If the Malliavin derivative of $\bar{u}$ is bounded in $L^2([0,T]^2),i.e.$
\begin{equation}\label{fvsbs}
\int_0^T\int_0^s |D_r\bar{u}_s|^2drds\leq c\,\,a.s.
\end{equation}
for some $c>0,$ then we have
\begin{equation}\label{fvdt}
P\left(\sup\limits_{0\leq t\leq T}\int_0^tu_sdB_s>x\right)\leq 2\exp\left(-\frac{2x^2}{\left(\bar{\sigma}_T+\sqrt{\bar{\sigma}_T^2+4\sqrt{c}\,x}\right)^2}\right),\,\,\,x\geq  0,
\end{equation}
where $\bar{\sigma}_T^2:=\int_0^TE|\bar{u}_t|^2dt$ and $D$ denotes the Malliavin derivative operator. It is worth noting that (\ref{fvdt}) is a natural generalization of (\ref{vms}). Indeed, assuming the condition (\ref{igy}), the condition (\ref{vms1}) is satisfied with $\bar{u}_t=\frac{M}{\sqrt{T}},0\leq t\leq T.$ Furthermore, the condition (\ref{fvsbs}) is satisfied for all $c>0.$ Letting $c\to 0,$ the inequality (\ref{fvdt}) becomes
$$P\left(\sup\limits_{0\leq t\leq T}\int_0^tu_sdB_s>x\right)\leq 2e^{-\frac{x^2}{2\bar{\sigma}_T^2}}=2e^{-\frac{x^2}{2M^2}},\,\,x\geq 0,$$
which recovers (\ref{vms}) (up to a constant).

The rest of this article is organized as follows. In Section \ref{oo4}, we recall some concepts of Malliavin calculus and a general estimate for the tail distribution of Malliavin differentiable random variables. Our main results are then stated and proved in Section \ref{fpq}. In Section \ref{gvs}, we apply our results to derive new concentration inequalities for Gaussian functionals, double Wiener-It\^o integrals and CIR processes.

\section{Preliminaries}\label{oo4}
Let us recall some elements of stochastic calculus of variations (for more details see \cite{nualartm2}). For $h\in L^2[0,T],$ we denote by $B(h)$ the Wiener integral
$$B(h)=\int\limits_0^T h(t)dB_t.$$
Let $\mathcal{S}$ denote the dense subset of $L^2(\Omega, \mathcal{F},P)$ consisting of smooth random variables of the form
\begin{equation}\label{ro}
F=f(B(h_1),...,B(h_n)),
\end{equation}
where $n\in \mathbb{N}, f\in C_b^\infty(\mathbb{R}^n),h_1,...,h_n\in L^2[0,T].$ If $F$ has the form (\ref{ro}), we define its Malliavin derivative as the process $DF:=\{D_tF, t\in [0,T]\}$ given by
$$D_tF=\sum\limits_{k=1}^n \frac{\partial f}{\partial x_k}(B(h_1),...,B(h_n)) h_k(t).$$
For any $1\leq p<\infty,$ we shall denote by $\mathbb{D}^{1,p}$ 
the closure of $\mathcal{S}$ with respect to the norm
$$\|F\|^p_{1,p}:=E|F|^p+E\bigg[\int\limits_0^T|D_u F|^pdu\bigg].$$
A random variable $F$ is said Malliavin differentiable if it belongs to $\mathbb{D}^{1,2}.$ The following concentration inequality will play a key role in the present work.
\begin{lem}\label{kf98jk0} Let $Z$ be a centered random variable in $\mathbb{D}^{1,2}.$ Assume there exists a non-random constant $M$ such that
 \begin{equation}\label{lowup02}
\int_0^T |D_rZ|^2dr\leq M^2\,\,a.s.
\end{equation}
Then the following estimate for tail probabilities holds:
\begin{equation}\label{lowup01}
P\left(Z\geq x\right)\leq e^{-\frac{x^2}{2M^2}},\quad x\geq 0.
\end{equation}
\end{lem}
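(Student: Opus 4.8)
The plan is to run a Herbst-type argument on the moment generating function $\phi(\lambda):=\E[e^{\lambda Z}]$, $\lambda\ge 0$, controlling its logarithmic derivative through the integration-by-parts machinery of Malliavin calculus. First I would write $\phi'(\lambda)=\E[Ze^{\lambda Z}]$ and invoke the covariance identity valid for a centered $Z\in\mathbb{D}^{1,2}$: for sufficiently regular $G$,
$$\E[ZG]=\E\left[\int_0^T D_r(-L^{-1}Z)\,D_rG\,dr\right],$$
where $L^{-1}$ is the pseudo-inverse of the generator of the Ornstein--Uhlenbeck semigroup $(P_t)_{t\ge 0}$. Taking $G=e^{\lambda Z}$, so that $D_rG=\lambda e^{\lambda Z}D_rZ$, this recasts the derivative of $\phi$ as
$$\phi'(\lambda)=\lambda\,\E\left[e^{\lambda Z}\int_0^T D_r(-L^{-1}Z)\,D_rZ\,dr\right].$$

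The crux of the argument, and the step I expect to be the main obstacle, is the almost-sure bound $\int_0^T D_r(-L^{-1}Z)\,D_rZ\,dr\le M^2$. To get it I would use Mehler's formula in the form $D_r(-L^{-1}Z)=\int_0^\infty e^{-t}P_t(D_rZ)\,dt$, so that
$$\int_0^T D_r(-L^{-1}Z)\,D_rZ\,dr=\int_0^\infty e^{-t}\left(\int_0^T P_t(D_rZ)\,D_rZ\,dr\right)dt.$$
By the Cauchy--Schwarz inequality in $L^2[0,T]$ the inner integral is at most $\left(\int_0^T P_t(D_rZ)^2\,dr\right)^{1/2}\left(\int_0^T(D_rZ)^2\,dr\right)^{1/2}$; the second factor is $\le M$ by (\ref{lowup02}), while for the first, Jensen's inequality for the Markov contraction $P_t$ gives $P_t(D_rZ)^2\le P_t\big((D_rZ)^2\big)$, whence $\int_0^T P_t(D_rZ)^2\,dr\le P_t\big(\int_0^T(D_rZ)^2\,dr\big)\le M^2$. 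Thus each inner integral is $\le M^2$, and since $\int_0^\infty e^{-t}\,dt=1$ the claimed bound follows.

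Feeding this back, and using $\lambda\ge 0$ and $e^{\lambda Z}>0$, I obtain the differential inequality $\phi'(\lambda)\le \lambda M^2\phi(\lambda)$, i.e. $(\log\phi)'(\lambda)\le \lambda M^2$. Integrating from $0$ with $\phi(0)=1$ yields the sub-Gaussian bound $\E[e^{\lambda Z}]\le e^{\lambda^2M^2/2}$, and Markov's inequality $P(Z\ge x)\le e^{-\lambda x}\phi(\lambda)$ optimized at $\lambda=x/M^2$ gives exactly (\ref{lowup01}). The one remaining point requiring care is the a priori finiteness of $\phi$ (and the justification for differentiating under the expectation); I would handle this by a preliminary truncation of $Z$, the needed integrability being guaranteed once the above estimate is available.
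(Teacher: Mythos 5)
Your proof is correct, but note that the paper does not actually prove this lemma: its ``proof'' is a one-line citation of Theorem 9.1.1 in \cite{Ustunel2015}. What you have written is essentially the standard argument behind that cited result, namely the Herbst method combined with the Malliavin covariance identity and the Ornstein--Uhlenbeck semigroup; this is how the inequality is proved in \"Ust\"unel's notes and also in Nourdin--Peccati's book (their Theorem 2.9.1). Your key steps all check out: the representation $D_r(-L^{-1}Z)=\int_0^\infty e^{-t}P_t(D_rZ)\,dt$ is valid for centered $Z\in\mathbb{D}^{1,2}$ by the commutation relation $DP_t=e^{-t}P_tD$; Cauchy--Schwarz in $L^2[0,T]$ plus Jensen's inequality for the Markov operator $P_t$ (which preserves almost sure bounds, by its Mehler kernel representation) give the almost sure estimate $\int_0^T D_r(-L^{-1}Z)\,D_rZ\,dr\le M^2$; and the resulting differential inequality $(\log\phi)'(\lambda)\le \lambda M^2$ integrates to $\E[e^{\lambda Z}]\le e^{\lambda^2M^2/2}$, whence \eqref{lowup01} by Chernoff with $\lambda=x/M^2$. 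The integrability issue you flag at the end is genuine but standard and is resolved exactly as you suggest: apply the argument to $\psi_n(Z)-\E[\psi_n(Z)]$ where $\psi_n$ is smooth, bounded, $|\psi_n'|\le 1$ and $\psi_n(x)=x$ on $[-n,n]$, so that the chain rule preserves the bound $\int_0^T|D_r\psi_n(Z)|^2dr\le M^2$ and all expectations are finite, then let $n\to\infty$ using $\E[\psi_n(Z)]\to\E[Z]=0$. So your proposal supplies a complete, self-contained argument where the paper supplies only a reference.
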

\begin{proof}
See Theorem 9.1.1 in \cite{Ustunel2015}.
\end{proof}

\section{Main results}\label{fpq}
Our first lemma is probably well known. It is stated and proved here for the reader's convenience.
\begin{lem}
Let $u=(u_t)_{t\in [0,T]}$ be an $\mathbb{F}$-adapted stochastic process with $\int_0^TE|u_s|^2ds<\infty.$ Then, for any $y>0,$ we have
\begin{equation}\label{pwcl}
P\left(\sup\limits_{0\leq t\leq T}\int_0^tu_sdB_s>x\right)\leq e^{-\frac{x^2}{2y}}+P\left(\int_0^T u_s^2ds\geq y\right),\,\,\,x\geq 0.
\end{equation}
\end{lem}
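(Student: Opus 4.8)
The plan is to reduce the unbounded case to the bounded case (\ref{vms}) via a localization argument. Write $M_t := \int_0^t u_s\,dB_s$ and fix $y>0$. The first step is to introduce the stopping time
$$\tau := \inf\Big\{t\in[0,T] : \int_0^t u_s^2\,ds \geq y\Big\}\wedge T,$$
which is well defined because $t\mapsto \int_0^t u_s^2\,ds$ is continuous and nondecreasing almost surely; its finiteness, and hence continuity as an indefinite integral, follows from $\int_0^T\E|u_s|^2\,ds<\infty$.

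Next I would truncate the integrand by setting $\tilde u_s := u_s\ind_{\{s\leq \tau\}}$. Since $\tau$ is a stopping time, $\tilde u$ is again $\mathbb{F}$-adapted and $\int_0^T\E|\tilde u_s|^2\,ds\leq \int_0^T\E|u_s|^2\,ds<\infty$, so the stochastic integral $\tilde M_t:=\int_0^t\tilde u_s\,dB_s$ is well defined and, by the localization property of the It\^o integral, $\tilde M_t = M_{t\wedge\tau}$. The key point is that the truncated integrand is bounded in $L^2[0,T]$: by continuity of $t\mapsto\int_0^t u_s^2\,ds$ one checks that $\int_0^T \tilde u_s^2\,ds = \int_0^\tau u_s^2\,ds \leq y$ almost surely, regardless of whether $\tau<T$ (where the integral equals $y$ by continuity) or $\tau=T$ (where it is at most $y$). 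Hence $\tilde u$ satisfies (\ref{igy}) with $M=\sqrt{y}$, and the classical inequality (\ref{vms}) yields $P\big(\sup_{0\leq t\leq T}\tilde M_t > x\big)\leq e^{-x^2/(2y)}$.

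Finally I would split the event according to whether the total quadratic variation exceeds $y$. On $\{\int_0^T u_s^2\,ds < y\}$ we have $\tau=T$, so $M_t=\tilde M_t$ for all $t\in[0,T]$; therefore
$$P\Big(\sup_{0\leq t\leq T} M_t>x,\ \int_0^Tu_s^2\,ds<y\Big)\leq P\Big(\sup_{0\leq t\leq T} \tilde M_t>x\Big)\leq e^{-\frac{x^2}{2y}},$$
while the complementary contribution is bounded trivially by $P\big(\int_0^T u_s^2\,ds\geq y\big)$. Adding the two estimates gives (\ref{pwcl}). The only delicate points, which I expect to be the main obstacle and would verify carefully, are the identity $\tilde M_t=M_{t\wedge\tau}$ and the almost sure bound $\int_0^\tau u_s^2\,ds\leq y$; both rest on the almost sure continuity of $t\mapsto\int_0^t u_s^2\,ds$, and it is precisely this continuity that makes the localization at level $y$ produce the exact constant needed to invoke (\ref{vms}).
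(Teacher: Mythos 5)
Your proof is correct, but it follows a genuinely different route from the paper's. You localize: you stop the process at the first time the quadratic variation reaches $y$, observe that the truncated integrand $\tilde u_s = u_s\ind_{\{s\le\tau\}}$ satisfies the boundedness condition (\ref{igy}) with $M^2=y$, invoke the classical inequality (\ref{vms}) for bounded integrands as a black box, and then split on the event $\left\{\int_0^T u_s^2\,ds < y\right\}$, where $\tau=T$ forces $M_t=\tilde M_t$. All the delicate points you flag (adaptedness of $\tilde u$, the identity $\tilde M_t = M_{t\wedge\tau}$, the a.s.\ bound $\int_0^\tau u_s^2\,ds\le y$ via continuity of the indefinite integral) do hold, so the argument is sound. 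The paper instead gives a self-contained proof: it splits on the same event but, rather than reducing to (\ref{vms}), it bounds the first piece directly using the exponential supermartingale $\exp\left(\lambda\int_0^t u_s\,dB_s - \tfrac{\lambda^2}{2}\int_0^t u_s^2\,ds\right)$, Doob's maximal inequality for nonnegative supermartingales, and optimization over $\lambda$ (the optimal choice being $\lambda_0 = x/y$). What your approach buys is modularity — the lemma is exhibited as a formal consequence of the known bounded-integrand bound, with no exponential-martingale machinery appearing explicitly. What the paper's approach buys is self-containedness: it does not presuppose (\ref{vms}) (indeed it reproves it, since taking $\int_0^T u_s^2\,ds\le M^2$ a.s.\ and $y=M^2$ makes the second term vanish), and it avoids the stopping-time bookkeeping entirely.
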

\begin{proof}
Let $\lambda$ be a positive real number. We have, for all $x\geq 0$ and $y>0,$
\begin{align*}
P&\left(\sup\limits_{0\leq t\leq T}\int_0^tu_sdB_s>x\right)\\
&\leq P\left(\sup\limits_{0\leq t\leq T}\int_0^t\lambda u_sdB_s>\lambda x,\int_0^T u_s^2ds<y\right)+P\left(\int_0^T u_s^2ds\geq y\right)\\
&\leq P\left(\sup\limits_{0\leq t\leq T}\left(\int_0^t\lambda u_sdB_s-\frac{1}{2}\int_0^t \lambda^2 u_s^2ds\right)>\lambda x-\frac{\lambda^2 y}{2}\right)+P\left(\int_0^T u_s^2ds\geq y\right).
\end{align*}
By the maximal inequality for supermartingale we have
\begin{align*}
P&\left(\sup\limits_{0\leq t\leq T}\left(\int_0^t\lambda u_sdB_s-\frac{1}{2}\int_0^t \lambda^2 u_s^2ds\right)>\lambda x-\frac{\lambda^2 y}{2}\right)\\
&\leq P\left(\sup\limits_{0\leq t\leq T}\exp\left(\int_0^t\lambda u_sdB_s-\frac{1}{2}\int_0^t \lambda^2 u_s^2ds\right)>\exp\left(\lambda x-\frac{\lambda^2 y}{2}\right)\right)\\
&\leq \exp\left(-\lambda x+\frac{\lambda^2 y}{2}\right),\,\,\,x\geq 0.
\end{align*}
The function $\lambda\mapsto \exp\left(-\lambda x+\frac{\lambda^2 y}{2}\right)$ attains its minimum value at $\lambda_0=\frac{x}{y}.$ Choosing $\lambda=\lambda_0,$ we get
$$P\left(\sup\limits_{0\leq t\leq T}\left(\int_0^t\lambda_0 u_sdB_s-\frac{1}{2}\int_0^t \lambda_0^2 u_s^2ds\right)>\lambda_0 x-\frac{\lambda_0^2 y}{2}\right)\leq e^{-\frac{x^2}{2y}},\,\,\,x\geq 0,$$
and hence,
$$P\left(\sup\limits_{0\leq t\leq T}\int_0^tu_sdB_s>x\right)\leq e^{-\frac{x^2}{2y}}+P\left(\int_0^T u_s^2ds\geq y\right),\,\,\,x\geq 0.$$
The proof of the lemma is complete.
\end{proof}
We now use the techniques of Malliavin calculus to derive new concentration inequalities for  It\^o stochastic integrals.
\begin{thm}\label{yyj} Let $u=(u_t)_{t\in [0,T]}$ be an $\mathbb{F}$-adapted stochastic process verifying the following hypothesis: there exists an $\mathbb{F}$-adapted stochastic process $\bar{u}=(\bar{u}_t)_{t\in [0,T]}$ such that $\bar{\sigma}_T^2:=\int_0^TE|\bar{u}_s|^2ds<\infty,$ $\int_0^Tu_t^2dt\leq \int_0^T\bar{u}_t^2dt\,a.s.$ and $\bar{u}_t\in \mathbb{D}^{1,2}$ for each $t\in [0,T]$ and
\begin{equation}\label{cml}
\int_0^T\int_0^s |D_r\bar{u}_s|^2drds\leq c\left(\int_0^T \bar{u}_s^2ds\right)^{\alpha}\,\,a.s.
\end{equation}
for some $c>0$ and $\alpha\in [0,1].$ For any increasing function $v:[0,\infty)\to [1,\infty),$ we have

\noindent (i) If $\alpha\in [0,1),$ then
\begin{equation}\label{ttr}
P\left(\sup\limits_{0\leq t\leq T}\int_0^tu_sdB_s>x\right)\leq e^{-\frac{x^2}{2\bar{\sigma}_T^2v(x)}}+\exp\left(-\frac{\bar{\sigma}_T^{2-2\alpha}(v^\frac{1-\alpha}{2}(x)-1)^2}{2c(1-\alpha)^2}\right),\,x\geq  v^{-1}(1).
\end{equation}
\noindent (ii) If $\alpha=1,$ then
\begin{equation}\label{8ujk}
P\left(\sup\limits_{0\leq t\leq T}\int_0^tu_sdB_s>x\right)\leq e^{-\frac{x^2}{2\bar{\sigma}_T^2v(x)}}+\exp\left(-\frac{\ln^2(v(x))}{8c}\right),\,\,\,x\geq  v^{-1}(1),
\end{equation}
where $v^{-1}$ denotes the inverse function of $v.$
\end{thm}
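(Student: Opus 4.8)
The plan is to combine the reduction lemma \eqref{pwcl} with the Gaussian-type tail bound of Lemma \ref{kf98jk0}, applied not to the stochastic integral itself but to a suitable transformation of the quadratic functional $W:=\int_0^T\bar u_s^2\,ds$. First I would invoke \eqref{pwcl} with the choice $y=\bar\sigma_T^2\,v(x)$; since $\int_0^T u_s^2\,ds\le W$ a.s.\ by hypothesis, this already produces the first summand $e^{-x^2/(2\bar\sigma_T^2 v(x))}$ appearing in both \eqref{ttr} and \eqref{8ujk}, and reduces everything to estimating the deviation probability $P\big(W\ge \bar\sigma_T^2 v(x)\big)$, i.e.\ the probability that $W$ exceeds $v(x)$ times its mean $E[W]=\bar\sigma_T^2$.

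The core of the argument is a Malliavin-derivative computation for $W$. Using adaptedness of $\bar u$ (so that $D_r\bar u_s=0$ for $r>s$) together with the chain rule, $D_r W=2\int_r^T\bar u_s D_r\bar u_s\,ds$, and Cauchy--Schwarz gives $|D_r W|^2\le 4W\int_r^T|D_r\bar u_s|^2\,ds$. Integrating in $r$ and invoking hypothesis \eqref{cml} yields the key pathwise estimate
$$\int_0^T|D_r W|^2\,dr\le 4c\,W^{1+\alpha}\quad\text{a.s.}$$
This bound is random rather than deterministic, so Lemma \ref{kf98jk0} cannot be applied to $W$ directly. The remedy is to pass to $h(W)$ for a concave increasing function $h$ chosen so that $(h'(W))^2\cdot 4cW^{1+\alpha}$ becomes constant: taking $h(w)=w^{(1-\alpha)/2}$ when $\alpha\in[0,1)$ gives $\int_0^T|D_r h(W)|^2\,dr\le c(1-\alpha)^2$, while $h(w)=\ln w$ when $\alpha=1$ gives the deterministic bound $4c$.

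With this transformation in hand, set $Z:=h(W)-E[h(W)]$, a centered element of $\mathbb{D}^{1,2}$ satisfying the hypothesis of Lemma \ref{kf98jk0} with $M^2=c(1-\alpha)^2$ (resp.\ $M^2=4c$). Since $h$ is increasing, $P\big(W\ge\bar\sigma_T^2 v(x)\big)=P\big(Z\ge h(\bar\sigma_T^2 v(x))-E[h(W)]\big)$, and by Jensen's inequality (concavity of $h$) one has $E[h(W)]\le h(E[W])=h(\bar\sigma_T^2)$, so the threshold is at least $h(\bar\sigma_T^2 v(x))-h(\bar\sigma_T^2)\ge 0$ for $x\ge v^{-1}(1)$. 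Feeding this smaller threshold into \eqref{lowup01} and simplifying --- the difference equals $\bar\sigma_T^{1-\alpha}\big(v^{(1-\alpha)/2}(x)-1\big)$ in the first case and $\ln v(x)$ in the second --- produces exactly the second summands in \eqref{ttr} and \eqref{8ujk}.

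The main obstacle I anticipate is purely technical rather than conceptual: justifying the chain rule and the membership $h(W)\in\mathbb{D}^{1,2}$ for the non-Lipschitz functions $h$ (the power has infinite slope at the origin, and $\ln$ blows up there), which will require approximating $h$ by smooth bounded functions with a controlled limit, or an a priori integrability argument assembled from \eqref{cml} together with $\bar\sigma_T^2<\infty$, as well as verifying the interchange of $D_r$ with the $ds$-integral defining $W$.
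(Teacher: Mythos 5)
Your proposal is correct and follows essentially the same route as the paper: reduce via \eqref{pwcl} with $y=\bar\sigma_T^2 v(x)$, then apply Lemma \ref{kf98jk0} to $W^{(1-\alpha)/2}$ (resp.\ $\ln W$), whose Malliavin derivative is bounded by $c(1-\alpha)^2$ (resp.\ $4c$) thanks to \eqref{cml}, Cauchy--Schwarz, and Jensen's inequality for the centering. The only differences are presentational --- the paper differentiates the transformed variable $X_\alpha$ directly rather than first bounding $\int_0^T|D_rW|^2dr$, and it does not address the chain-rule/integrability justification for the non-Lipschitz transformations that you rightly flag as a technical point.
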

\begin{proof} Applying the inequality (\ref{pwcl}) to $y=\bar{\sigma}_T^2v(x)$ we obtain
\begin{align}
P\left(\sup\limits_{0\leq t\leq T}\int_0^tu_sdB_s>x\right)&\leq e^{-\frac{x^2}{2\bar{\sigma}_T^2v(x)}}+P\left(\int_0^T u_s^2ds\geq \bar{\sigma}_T^2v(x)\right)\notag\\
&\leq e^{-\frac{x^2}{2\bar{\sigma}_T^2v(x)}}+P\left(\int_0^T \bar{u}_s^2ds\geq \bar{\sigma}_T^2v(x)\right),\,\,\,x\geq 0.\label{ppl}
\end{align}
In order to bound the last addend in the right hand side of (\ref{ppl}), we consider two cases.

\noindent{\it Case 1: $0\leq \alpha<1.$} Consider the random variable
$$X_\alpha:=\left(\int_0^T \bar{u}_s^2ds\right)^{\frac{1-\alpha}{2}}.$$
By the chain rule of Malliavin derivatives we have
$$D_rX_\alpha=(1-\alpha)\int_r^T \bar{u}_sD_r\bar{u}_sds\left(\int_0^T \bar{u}_s^2ds\right)^{\frac{-1-\alpha}{2}},\,\,0\leq r\leq T.$$
Then, by the Cauchy-Schwarz inequality, we obtain
$$|D_rX_\alpha|^2\leq (1-\alpha)^2\int_r^T |D_r\bar{u}_s|^2ds\left(\int_0^T \bar{u}_s^2ds\right)^{-\alpha}.$$
This, combined with the condition (\ref{cml}), yields
\begin{align}
\int_0^T|D_rX_\alpha|^2dr&\leq (1-\alpha)^2\int_0^T\int_0^s |D_r\bar{u}_s|^2drds\left(\int_0^T \bar{u}_s^2ds\right)^{-\alpha}\notag\\
&\leq c(1-\alpha)^2\,\,a.s.\label{udb}
\end{align}
We observe that, by Jensen's inequality,  $E[X_\alpha]\leq \left(\int_0^T E[\bar{u}_s^2]ds\right)^{\frac{1-\alpha}{2}}=\bar{\sigma}_T^{1-\alpha}.$ We therefore get
\begin{align*}
P\left(\int_0^T \bar{u}_s^2ds\geq \bar{\sigma}_T^2v(x)\right)&=P\left(X_\alpha\geq \bar{\sigma}_T^{1-\alpha}v^\frac{1-\alpha}{2}(x)\right)\\
&\leq P\left(X_\alpha-E[X_\alpha]\geq \bar{\sigma}_T^{1-\alpha}v^\frac{1-\alpha}{2}(x)-\bar{\sigma}_T^{1-\alpha}\right),\,\,\,x\geq 0.
\end{align*}
In view of Lemma \ref{kf98jk0}, the relation (\ref{udb}) implies that
\begin{align*}
P\left(\int_0^T \bar{u}_s^2ds\geq \bar{\sigma}_T^2v(x)\right)&\leq \exp\left(-\frac{(\bar{\sigma}_T^{1-\alpha}v^\frac{1-\alpha}{2}(x)-\bar{\sigma}_T^{1-\alpha})^2}{2c(1-\alpha)^2}\right),\,\,\,x\geq  v^{-1}(1).
\end{align*}
Inserting the above estimate into (\ref{ppl}) gives us
\begin{equation*}\label{qkda}
P\left(\sup\limits_{0\leq t\leq T}\int_0^tu_sdB_s>x\right)\leq e^{-\frac{x^2}{2\bar{\sigma}_T^2v(x)}}+\exp\left(-\frac{(\bar{\sigma}_T^{1-\alpha}v^\frac{1-\alpha}{2}(x)-\bar{\sigma}_T^{1-\alpha})^2}{2c(1-\alpha)^2}\right),\,x\geq  v^{-1}(1).
\end{equation*}
This finishes the proof of (\ref{ttr}).

\noindent{\it Case 2: $\alpha=1.$} Consider the random variable
$$X_1:=\ln\left(\int_0^T \bar{u}_s^2ds\right).$$
Once again, by the chain rule of Malliavin derivatives, we have
$$D_rX_1=\frac{\int_r^T 2\bar{u}_sD_r\bar{u}_sds}{\int_0^T \bar{u}_s^2ds},\,\,0\leq r\leq T.$$
Then, by the Cauchy-Schwarz inequality, we obtain
$$|D_rX_1|^2\leq \frac{4\int_r^T |D_r\bar{u}_s|^2ds}{\int_0^T \bar{u}_s^2ds}\,\,a.s.$$
Recalling the condition (\ref{cml}) with $\alpha=1,$ we deduce
$$\int_0^T|D_rX_1|^2dr\leq \frac{4\int_0^T\int_0^s |D_r\bar{u}_s|^2drds}{\int_0^T \bar{u}_s^2ds}\leq 4c\,\,a.s.$$
Thus the random variable $X_1$ satisfies the conditions  of Lemma \ref{kf98jk0}. Consequently, note that $E[X_1]\leq \ln(\bar{\sigma}_T^2),$ we obtain
\begin{align*}
P\left(\int_0^T \bar{u}_s^2ds\geq \bar{\sigma}_T^2v(x)\right)&=P\left(X_1\geq \ln(\bar{\sigma}_T^2v(x))\right)\\
&\leq P\left(X_1-E[X_1]\geq \ln(\bar{\sigma}_T^2v(x))-\ln(\bar{\sigma}_T^2)\right)\\
&\leq \exp\left(-\frac{\ln^2(v(x))}{8c}\right),\,\,\,x\geq  v^{-1}(1)
\end{align*}
Inserting the above estimate into (\ref{ppl}) gives us
\begin{equation*}\label{qkd}
P\left(\sup\limits_{0\leq t\leq T}\int_0^tu_sdB_s>x\right)\leq e^{-\frac{x^2}{2\bar{\sigma}_T^2v(x)}}+\exp\left(-\frac{\ln^2(v(x))}{8c}\right),\,\,\,x\geq  v^{-1}(1).
\end{equation*}
This finishes the proof of (\ref{8ujk}). So the proof of the theorem is complete.
\end{proof}
\begin{rem} When $\alpha\in [0,1),$ in view of (\ref{ttr}), we can choose $v(x)$ solving the following equation
$$\frac{x^2}{2\bar{\sigma}_T^2v(x)}=\frac{(\bar{\sigma}_T^{1-\alpha}v^\frac{1-\alpha}{2}(x)-\bar{\sigma}_T^{1-\alpha})^2}{2c(1-\alpha)^2},\,\,x\geq 0,$$
or equivalently
\begin{equation}\label{akola}
\frac{x}{\bar{\sigma}_T\sqrt{v(x)}}=\frac{\bar{\sigma}_T^{1-\alpha}v^\frac{1-\alpha}{2}(x)-\bar{\sigma}_T^{1-\alpha}}{\sqrt{c}(1-\alpha)},\,\,x\geq 0.
\end{equation}
Similarly, when $\alpha=1,$ we can choose the function $v(x)$ such that
\begin{equation}\label{kola}
\sqrt{v(x)}\ln v(x)=\frac{2\sqrt{c}}{\bar{\sigma}_T}x,\,\,\,x\geq 0.
\end{equation}
Note that, for each $x\geq 0,$ the equations (\ref{akola}) and (\ref{kola}) admit a unique solution $v(x).$ Moreover, $v(0)=1$ and $v(x)$ is an increasing function.
\end{rem}
In special case, when $\alpha=0,$  the equation (\ref{akola}) can be solved explicitly and its unique solution is given by
$$v(x)=\frac{1}{4\bar{\sigma}_T^2}\left(\bar{\sigma}_T+\sqrt{\bar{\sigma}_T^2+4\sqrt{c}\,x}\right)^2,\,\,\,x\geq 0.$$
Using this unique solution and the concentration inequality (\ref{ttr}), we obtain the following.
\begin{cor}\label{fdl}
Let $u=(u_t)_{t\in [0,T]}$ be an $\mathbb{F}$-adapted stochastic process verifying the following hypothesis: there exists an $\mathbb{F}$-adapted stochastic process $\bar{u}=(\bar{u}_t)_{t\in [0,T]}$ such that $\bar{\sigma}_T^2:=\int_0^TE|\bar{u}_s|^2ds<\infty,$ $\int_0^Tu_t^2dt\leq \int_0^T\bar{u}_t^2dt\,a.s.$ and $\bar{u}_t\in \mathbb{D}^{1,2}$ for each $t\in [0,T]$ and
\begin{equation}\label{crml}
\int_0^T\int_0^s |D_r\bar{u}_s|^2drds\leq c\,\,a.s.
\end{equation}
for some $c>0.$ Then, it holds that
\begin{equation}\label{ydlj}
P\left(\sup\limits_{0\leq t\leq T}\int_0^tu_sdB_s>x\right)\leq 2\exp\left(-\frac{2x^2}{\left(\bar{\sigma}_T+\sqrt{\bar{\sigma}_T^2+4\sqrt{c}\,x}\right)^2}\right),\,\,\,x\geq  0.
\end{equation}
\end{cor}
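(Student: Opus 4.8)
The plan is to specialize Theorem \ref{yyj}(i) to the case $\alpha = 0$ and then make the single free parameter, the increasing function $v$, work as hard as possible by choosing it so that the two exponential terms in (\ref{ttr}) coincide.

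First I would observe that the hypothesis (\ref{crml}) is precisely hypothesis (\ref{cml}) with $\alpha = 0$, since $\left(\int_0^T \bar{u}_s^2\,ds\right)^0 = 1$. Hence the full conclusion of Theorem \ref{yyj}(i) is available, and with $\alpha = 0$ the inequality (\ref{ttr}) reads
\begin{equation*}
P\left(\sup\limits_{0\leq t\leq T}\int_0^tu_sdB_s>x\right)\leq e^{-\frac{x^2}{2\bar{\sigma}_T^2 v(x)}}+\exp\left(-\frac{\bar{\sigma}_T^{2}\big(\sqrt{v(x)}-1\big)^2}{2c}\right),
\end{equation*}
valid for every increasing $v:[0,\infty)\to[1,\infty)$ and all $x\geq v^{-1}(1)$.

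Next I would choose $v$ so that the two exponents are equal; by the preceding remark this amounts to solving (\ref{akola}) with $\alpha = 0$. Writing $w := \sqrt{v(x)}$, that equation becomes $\sqrt{c}\,x = \bar{\sigma}_T^2\,w(w-1)$, i.e.\ the quadratic $\bar{\sigma}_T^2 w^2 - \bar{\sigma}_T^2 w - \sqrt{c}\,x = 0$, whose positive root is
\begin{equation*}
w=\frac{\bar{\sigma}_T+\sqrt{\bar{\sigma}_T^2+4\sqrt{c}\,x}}{2\bar{\sigma}_T},\qquad\text{so}\qquad v(x)=\frac{1}{4\bar{\sigma}_T^2}\left(\bar{\sigma}_T+\sqrt{\bar{\sigma}_T^2+4\sqrt{c}\,x}\right)^2.
\end{equation*}
I would then check that this $v$ is admissible: it is manifestly increasing in $x$, it maps $[0,\infty)$ into $[1,\infty)$, and it satisfies $v(0)=1$, so that $v^{-1}(1)=0$ and the resulting bound holds on the full range $x\geq 0$ rather than only $x\geq v^{-1}(1)$.

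Finally, substituting this $v$ back I would compute the common value of the two exponents. A direct simplification gives
\begin{equation*}
\frac{x^2}{2\bar{\sigma}_T^2 v(x)}=\frac{2x^2}{\left(\bar{\sigma}_T+\sqrt{\bar{\sigma}_T^2+4\sqrt{c}\,x}\right)^2},
\end{equation*}
and by construction the second exponent equals the same quantity; adding the two equal terms produces the factor $2$ in (\ref{ydlj}). I do not expect any genuine obstacle: all the analytic content lives in Theorem \ref{yyj}, and the only work is the quadratic solve together with the bookkeeping verification that $v(0)=1$, which is what upgrades the bound to all $x\geq 0$.
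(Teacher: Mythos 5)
Your proposal is correct and follows exactly the paper's own route: the paper also obtains Corollary \ref{fdl} by solving equation (\ref{akola}) with $\alpha=0$ (the same quadratic in $\sqrt{v(x)}$, yielding the same explicit $v$) and substituting it into (\ref{ttr}), with the factor $2$ coming from the two equal exponents. Your additional checks --- that $v(0)=1$ so $v^{-1}(1)=0$ and the bound holds for all $x\geq 0$, and the simplification of the common exponent to $\frac{2x^2}{\left(\bar{\sigma}_T+\sqrt{\bar{\sigma}_T^2+4\sqrt{c}\,x}\right)^2}$ --- are precisely the details the paper leaves implicit.
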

In general case, it is not easy to solve the equations (\ref{akola}) and (\ref{kola}). Here, by using a suitable function $v(x),$ we obtain the following explicit bounds.
\begin{cor}\label{ryyj} Let $u=(u_t)_{t\in [0,T]}$ be as in Theorem \ref{yyj}. The following statements hold true.

\noindent (i) If $\alpha\in [0,1),$ then
 \begin{multline}\label{vttr}
P\left(\sup\limits_{0\leq t\leq T}\int_0^tu_sdB_s>x\right)\leq 2\exp\left(-\frac{x^2}{2\left(\bar{\sigma}_T^{1-\alpha}+\left(c(1-\alpha)^2x^2\right)^{\frac{1-\alpha}{4-2\alpha}}\right)^{\frac{2}{1-\alpha}}}\right),\,\,\,x\geq 0.
\end{multline}
\noindent (ii) If $\alpha=1,$ then
\begin{multline}\label{eujk}
P\left(\sup\limits_{0\leq t\leq T}\int_0^tu_sdB_s>x\right)\leq \exp\left(-\frac{x^2\ln^2\left(\frac{\sqrt{c}}{\bar{\sigma}_T}x+e\right)}{2(\sqrt{c}\,x+\bar{\sigma}_T)^2}\right)
\\+\exp\left(-\frac{\left(\ln\left(\frac{\sqrt{c}}{\bar{\sigma}_T}x+1\right)-\ln\ln\left(\frac{\sqrt{c}}{\bar{\sigma}_T}x+e\right)\right)^2}{2c}\right),\,\,\,x\geq  0.
\end{multline}
\end{cor}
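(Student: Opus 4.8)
The plan is to derive both inequalities directly from Theorem \ref{yyj} by feeding it an explicit admissible function $v$ reverse-engineered from the desired right-hand sides. Since Theorem \ref{yyj} already carries out all the Malliavin-calculus estimates, the remaining work is threefold: (a) to exhibit the right $v$, (b) to check that it is admissible, i.e. increasing with $v:[0,\infty)\to[1,\infty)$, and (c) to simplify the two exponents produced by (\ref{ttr}) or (\ref{8ujk}).

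For part (i) I would take
$$v(x)=\left(1+\frac{\big(c(1-\alpha)^2x^2\big)^{\frac{1-\alpha}{4-2\alpha}}}{\bar{\sigma}_T^{1-\alpha}}\right)^{\frac{2}{1-\alpha}}.$$
Abbreviating $A:=\bar{\sigma}_T^{1-\alpha}$ and $B:=\big(c(1-\alpha)^2x^2\big)^{\frac{1-\alpha}{4-2\alpha}}$, the two structural facts are $\bar{\sigma}_T^2v(x)=(A+B)^{2/(1-\alpha)}$ and $v^{(1-\alpha)/2}(x)-1=B/A$. The first shows that the first exponent in (\ref{ttr}) equals exactly the exponent $\frac{x^2}{2(A+B)^{2/(1-\alpha)}}$ appearing in (\ref{vttr}); the second turns the second exponent in (\ref{ttr}) into $\frac{B^2}{2c(1-\alpha)^2}$. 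It then suffices to show this dominates the target exponent, i.e. $\frac{B^2(A+B)^{2/(1-\alpha)}}{c(1-\alpha)^2}\geq x^2$. Using $A+B\geq B$ together with the identity $B^{2+2/(1-\alpha)}=B^{2(2-\alpha)/(1-\alpha)}=c(1-\alpha)^2x^2$ (which is precisely how the fractional power defining $B$ was chosen), the left side is at least $x^2$, so both terms in (\ref{ttr}) are bounded by $\exp\big(-\frac{x^2}{2(A+B)^{2/(1-\alpha)}}\big)$ and their sum yields the factor $2$ in (\ref{vttr}).

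For part (ii) I would take
$$v(x)=\frac{\left(\frac{\sqrt{c}}{\bar{\sigma}_T}x+1\right)^2}{\ln^2\left(\frac{\sqrt{c}}{\bar{\sigma}_T}x+e\right)}.$$
Here no balancing is needed: substituting $v$ into the first exponent of (\ref{8ujk}) reproduces the first term of (\ref{eujk}) verbatim, while from $\ln v(x)=2\big(\ln(\tfrac{\sqrt{c}}{\bar{\sigma}_T}x+1)-\ln\ln(\tfrac{\sqrt{c}}{\bar{\sigma}_T}x+e)\big)$ the factor $8$ in the denominator of the second exponent collapses it exactly to the second term of (\ref{eujk}). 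This is why (\ref{eujk}) retains two distinct exponentials rather than a single prefactor.

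Finally, for admissibility one checks $v(0)=1$ in both cases, so that $v^{-1}(1)=0$ and the estimates hold for every $x\geq 0$. Monotonicity is immediate in (i) because $B$ increases and the outer exponent $2/(1-\alpha)$ is positive; in (ii) it reduces to showing that $\frac{z+1}{\ln(z+e)}$ increases for $z\geq0$, which follows from $\ln(z+e)\geq 1>\frac{z+1}{z+e}$. I expect the only delicate point to be the reverse-engineering of $v$ and the attendant bookkeeping of fractional exponents in (i); once the correct $v$ is in hand, the inequality between the surviving exponent and the target is an identity in disguise, so there is no substantial analytic obstacle.
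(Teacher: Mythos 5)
Your proposal is correct and follows essentially the same route as the paper: both choices of $v$ coincide with the paper's (your $v$ in part (i) is the paper's $v(x)=\bigl(1+\bigl(c(1-\alpha)^2x^2/\bar{\sigma}_T^{4-2\alpha}\bigr)^{\frac{1-\alpha}{4-2\alpha}}\bigr)^{\frac{2}{1-\alpha}}$ rewritten), and the conclusion is drawn by substitution into (\ref{ttr}) and (\ref{8ujk}). The paper states the substitution "directly" gives the result; your verification of the domination $\frac{B^2(A+B)^{2/(1-\alpha)}}{c(1-\alpha)^2}\geq x^2$ and of admissibility simply supplies the bookkeeping the paper omits.
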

\begin{proof}We first recall that, when studying concentration inequalities, the asymptotic behavior as $x\to\infty$ is one of the most important characteristics.
Hence, when $\alpha\in [0,1),$ we should choose an increasing function $v$ satisfying $v(0)=1$ and $\frac{x^2}{2\bar{\sigma}_T^2v(x)}\sim\frac{(\bar{\sigma}_T^{1-\alpha}v^\frac{1-\alpha}{2}(x)-\bar{\sigma}_T^{1-\alpha})^2}{2c(1-\alpha)^2}$ as $x\to\infty.$ Here, among others, we use $v(x)=\left(1+\left(\frac{c(1-\alpha)^2x^2}{\bar{\sigma}_T^{4-2\alpha}}\right)^{\frac{1-\alpha}{4-2\alpha}}\right)^{\frac{2}{1-\alpha}}$ and we obtain (\ref{vttr}) directly from the concentration inequality (\ref{ttr}).

Similarly, when $\alpha=1,$ we want to  choose an increasing function $v$ satisfying $v(0)=1$ and $\frac{x^2}{2\bar{\sigma}_T^2v(x)}\sim\frac{\ln^2(v(x))}{8c}$ as $x\to\infty.$ Here we use $v(x)=\left(\frac{\sqrt{c}}{\bar{\sigma}_T}x+1\right)^2\ln^{-2}\left(\frac{\sqrt{c}}{\bar{\sigma}_T}x+e\right)
$ and hence, (\ref{8ujk}) becomes (\ref{eujk}).

The proof of the corollary is complete.
\end{proof}
\begin{rem} The estimates (\ref{vttr}) and (\ref{eujk}) give us the following asymptotic behaviors
$$\lim\limits_{x\to\infty}\frac{\ln P\left(\sup\limits_{0\leq t\leq T}\int_0^tu_sdB_s>x\right)}{x^{\frac{2-2\alpha}{2-\alpha}}}\leq -\frac{1}{2c^{\frac{1}{2-\alpha}}(1-\alpha)^{\frac{2}{2-\alpha}}}\,\,\,\text{when}\,\,\alpha\in [0,1),$$
$$\lim\limits_{x\to\infty}\frac{\ln P\left(\sup\limits_{0\leq t\leq T}\int_0^tu_sdB_s>x\right)}{\ln^2x}\leq -\frac{1}{2c}\,\,\,\text{when}\,\,\alpha=1.$$
\end{rem}

\begin{thm}\label{yyj2s} Let $u=(u_t)_{t\in [0,T]}$ be an $\mathbb{F}$-adapted stochastic process verifying the following hypothesis: there exists an $\mathbb{F}$-adapted stochastic process $\bar{u}=(\bar{u}_t)_{t\in [0,T]}$ such that $\bar{\sigma}_{\alpha,T}^2:=\int_0^TE|\bar{u}_s|^{2-2\alpha}ds<\infty,$ $\int_0^Tu_t^2dt\leq \int_0^T\bar{u}_t^2dt\,a.s.$ and $\bar{u}_t\in \mathbb{D}^{1,2}$ for each $t\in [0,T]$ and
\begin{equation}\label{frb}
 \int_0^T|\bar{u}_s|^{-2\alpha}\int_0^s |D_r\bar{u}_s|^2drds\leq c\,\,a.s.
\end{equation}
for some $c>0$ and $\alpha<0.$ For any increasing function $v:[0,\infty)\to [1,\infty),$ we have
\begin{multline}\label{erv}
P\left(\sup\limits_{0\leq t\leq T}\int_0^tu_sdB_s>x\right)\\\leq \exp\left(-\frac{x^2}{2\bar{\sigma}_{\alpha,T}^{\frac{2}{1-\alpha}}T^{-\frac{\alpha}{1-\alpha}}v(x)}\right)
+\exp\left(-\frac{\bar{\sigma}_{\alpha,T}^2(v^\frac{1-\alpha}{2}(x)-1)^2}{2c(1-\alpha)^2}\right),\,x\geq  v^{-1}(1).
\end{multline}
\end{thm}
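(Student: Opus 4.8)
The plan is to mimic Case~1 of the proof of Theorem~\ref{yyj}, but with a random variable tailored to the new hypothesis \eqref{frb}, in which the weight $|\bar u_s|^{-2\alpha}$ now sits \emph{inside} the space integral rather than being absorbed into a power of $\int_0^T\bar u_s^2\,ds$. First I would invoke the elementary bound \eqref{pwcl}, this time with the choice $y=\bar\sigma_{\alpha,T}^{2/(1-\alpha)}T^{-\alpha/(1-\alpha)}v(x)$, which reproduces the first exponential in \eqref{erv} verbatim and reduces the task to estimating $P(\int_0^T\bar u_s^2\,ds\ge y)$.

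The genuinely new ingredient, absent from Theorem~\ref{yyj}, is a H\"older step converting the $L^2$-mass of $\bar u$ into its $L^{2-2\alpha}$-mass. Since $\alpha<0$ we have $p:=1-\alpha>1$, and H\"older's inequality with conjugate exponents $p$ and $q=(1-\alpha)/(-\alpha)$ gives $\int_0^T\bar u_s^2\,ds\le\big(\int_0^T|\bar u_s|^{2-2\alpha}\,ds\big)^{1/(1-\alpha)}T^{-\alpha/(1-\alpha)}$ almost surely. By the very choice of $y$, the event $\{\int_0^T\bar u_s^2\,ds\ge y\}$ is therefore contained in $\{X_\alpha\ge\bar\sigma_{\alpha,T}\,v^{(1-\alpha)/2}(x)\}$ for the random variable $X_\alpha:=\big(\int_0^T|\bar u_s|^{2-2\alpha}\,ds\big)^{1/2}$.

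Next I would verify that $X_\alpha$ meets the hypotheses of Lemma~\ref{kf98jk0}. By the chain rule, $D_rX_\alpha=(1-\alpha)\big(\int_0^T|\bar u_s|^{2-2\alpha}\,ds\big)^{-1/2}\int_r^T|\bar u_s|^{1-2\alpha}\mathrm{sgn}(\bar u_s)D_r\bar u_s\,ds$. Splitting $|\bar u_s|^{1-2\alpha}=|\bar u_s|^{1-\alpha}|\bar u_s|^{-\alpha}$ and applying the Cauchy--Schwarz inequality to the inner integral, the factor $\int_r^T|\bar u_s|^{2-2\alpha}\,ds$ cancels against the prefactor, leaving $|D_rX_\alpha|^2\le(1-\alpha)^2\int_r^T|\bar u_s|^{-2\alpha}|D_r\bar u_s|^2\,ds$. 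Integrating in $r$, swapping the order of integration, and using \eqref{frb} gives $\int_0^T|D_rX_\alpha|^2\,dr\le c(1-\alpha)^2$ a.s., the required non-random bound with $M^2=c(1-\alpha)^2$. Jensen's inequality yields $E[X_\alpha]\le\bar\sigma_{\alpha,T}$, so for $x\ge v^{-1}(1)$ (where $v(x)\ge1$ makes the centered threshold nonnegative) I may center and apply Lemma~\ref{kf98jk0} to obtain the second exponential in \eqref{erv}; summing the two contributions finishes the proof.

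The step I expect to require the most care is the H\"older reduction together with the exponent bookkeeping: one must confirm that the prescribed $y$ is exactly the value that turns $\{\int_0^T\bar u_s^2\,ds\ge y\}$ into the clean threshold $\{X_\alpha\ge\bar\sigma_{\alpha,T}\,v^{(1-\alpha)/2}(x)\}$, and that the weight $|\bar u_s|^{-2\alpha}$ produced by the Cauchy--Schwarz split matches \eqref{frb} precisely. A minor point is the regularity of $x\mapsto|x|^{2-2\alpha}$: since $2-2\alpha>2$ for $\alpha<0$, this map is $C^1$ with $\mathrm{sgn}$-derivative, so the chain rule applies without trouble and the sign disappears upon squaring.
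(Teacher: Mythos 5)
Your proposal is correct and follows essentially the same route as the paper: the same random variable $X_\alpha=\big(\int_0^T|\bar u_s|^{2-2\alpha}ds\big)^{1/2}$, the same Cauchy--Schwarz split producing the weight $|\bar u_s|^{-2\alpha}$ so that \eqref{frb} yields $\int_0^T|D_rX_\alpha|^2dr\le c(1-\alpha)^2$, the same H\"older reduction $\int_0^T\bar u_s^2ds\le T^{-\alpha/(1-\alpha)}X_\alpha^{2/(1-\alpha)}$, and the same application of Lemma \ref{kf98jk0} and of \eqref{pwcl} with $y=\bar\sigma_{\alpha,T}^{2/(1-\alpha)}T^{-\alpha/(1-\alpha)}v(x)$. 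The only difference is the order of presentation (you apply \eqref{pwcl} first, the paper last), which is immaterial.
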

\begin{proof}For each $\alpha<0,$ we put
$$X_\alpha:=\left(\int_0^T |\bar{u}_s|^{2-2\alpha}ds\right)^{\frac{1}{2}}.$$
We have
$$D_rX_\alpha=(1-\alpha)\int_r^T |\bar{u}_s|^{1-2\alpha}D_r|\bar{u}_s|ds\left(\int_0^T |\bar{u}_s|^{2-2\alpha}ds\right)^{-\frac{1}{2}},\,\,0\leq r\leq T.$$By using the Cauchy-Schwarz inequality and the fact $|D_r|\bar{u}_s||\leq |D_r\bar{u}_s|,$ we deduce
$$|D_rX_\alpha|^2\leq (1-\alpha)^2\int_r^T |\bar{u}_s|^{-2\alpha}|D_r\bar{u}_s|^2ds$$
which, combined with (\ref{frb}), yields
$$\int_0^T|D_rX_\alpha|^2dr\leq (1-\alpha)^2\int_0^T|\bar{u}|_s^{-2\alpha}\int_0^s |D_r\bar{u}_s|^2drds\leq c(1-\alpha)^2\,\,a.s.$$
So, for any increasing function $v:[0,\infty)\to [1,\infty),$ we can use Lemma \ref{kf98jk0} to get
$$P\left(X_\alpha-E[X_\alpha]\geq \bar{\sigma}_{\alpha,T}v^{\frac{1-\alpha}{2}}(x)-\bar{\sigma}_{\alpha,T}\right)\leq \exp\left(-\frac{(\bar{\sigma}_{\alpha,T}v^\frac{1-\alpha}{2}(x)-\bar{\sigma}_{\alpha,T})^2}{2c(1-\alpha)^2}\right),\,x\geq  v^{-1}(1).$$
On the other hand, by H\"older's inequality, we have
$$\int_0^T \bar{u}_s^2ds\leq T^{-\frac{\alpha}{1-\alpha}}\left(\int_0^T |\bar{u}_s|^{2-2\alpha}ds\right)^{\frac{1}{1-\alpha}}=T^{-\frac{\alpha}{1-\alpha}}X_\alpha^{\frac{2}{1-\alpha}},$$
and hence,
\begin{align*}
P\left(\int_0^T \bar{u}_s^2ds\geq \bar{\sigma}_{\alpha,T}^{\frac{2}{1-\alpha}}T^{-\frac{\alpha}{1-\alpha}}v(x)\right)&\leq P\left(X_\alpha\geq \bar{\sigma}_{\alpha,T}v^{\frac{1-\alpha}{2}}(x)\right)\\
&\leq P\left(X_\alpha-E[X_\alpha]\geq \bar{\sigma}_{\alpha,T}v^{\frac{1-\alpha}{2}}(x)-\bar{\sigma}_{\alpha,T}\right)\\
&\leq \exp\left(-\frac{(\bar{\sigma}_{\alpha,T}v^\frac{1-\alpha}{2}(x)-\bar{\sigma}_{\alpha,T})^2}{2c(1-\alpha)^2}\right),\,x\geq  v^{-1}(1).
\end{align*}
We now apply the inequality (\ref{pwcl}) to $y=\bar{\sigma}_{\alpha,T}^{\frac{2}{1-\alpha}}T^{-\frac{\alpha}{1-\alpha}}v(x)$ and we obtain
 \begin{multline*}
P\left(\sup\limits_{0\leq t\leq T}\int_0^tu_sdB_s>x\right)\leq \exp\left(-\frac{x^2}{2\bar{\sigma}_{\alpha,T}^{\frac{2}{1-\alpha}}T^{-\frac{\alpha}{1-\alpha}}v(x)}\right)+P\left(\int_0^T u_s^2ds\geq \bar{\sigma}_{\alpha,T}^{\frac{2}{1-\alpha}}T^{-\frac{\alpha}{1-\alpha}}v(x)\right)\\
\leq \exp\left(-\frac{x^2}{2\bar{\sigma}_{\alpha,T}^{\frac{2}{1-\alpha}}T^{-\frac{\alpha}{1-\alpha}}v(x)}\right)+P\left(\int_0^T \bar{u}_s^2ds\geq \bar{\sigma}_{\alpha,T}^{\frac{2}{1-\alpha}}T^{-\frac{\alpha}{1-\alpha}}v(x)\right)\\
\leq \exp\left(-\frac{x^2}{2\bar{\sigma}_{\alpha,T}^{\frac{2}{1-\alpha}}T^{-\frac{\alpha}{1-\alpha}}v(x)}\right)
+\exp\left(-\frac{(\bar{\sigma}_{\alpha,T}v^\frac{1-\alpha}{2}(x)-\bar{\sigma}_{\alpha,T})^2}{2c(1-\alpha)^2}\right),\,\,x\geq  v^{-1}(1).
\end{multline*}
This completes the proof of the theorem.
\end{proof}
\noindent By choosing an increasing function $v$ satisfying $v(0)=1$ and $\frac{x^2}{2\bar{\sigma}_{\alpha,T}^{\frac{2}{1-\alpha}}T^{-\frac{\alpha}{1-\alpha}}v(x)}\sim
\frac{(\bar{\sigma}_{\alpha,T}v^\frac{1-\alpha}{2}(x)-\bar{\sigma}_{\alpha,T})^2}{2c(1-\alpha)^2}$ as $x\to\infty,$  we obtain the following explicit bound.
\begin{cor}Let $u=(u_t)_{t\in [0,T]}$ be as in Theorem \ref{yyj2s}. It holds that
\begin{multline}\label{erv1}
P\left(\sup\limits_{0\leq t\leq T}\int_0^tu_sdB_s>x\right)\\\leq 2\exp\left(-\frac{x^2}{2T^{-\frac{\alpha}{2-\alpha}}
\left(\bar{\sigma}_{\alpha,T}T^{-\frac{\alpha}{4-2\alpha}}+\left(c(1-\alpha)^2x^2\right)^{\frac{1-\alpha}{4-2\alpha}}\right)^{\frac{2}{1-\alpha}}}\right),\,x\geq 0.
\end{multline}
Consequently, we have
$$\lim\limits_{x\to\infty}\frac{\ln P\left(\sup\limits_{0\leq t\leq T}\int_0^tu_sdB_s>x\right)}{x^{\frac{2-2\alpha}{2-\alpha}}}\leq -\frac{1}{2c^{\frac{1}{2-\alpha}}(1-\alpha)^{\frac{2}{2-\alpha}}T^{-\frac{\alpha}{2-\alpha}}}.$$
\end{cor}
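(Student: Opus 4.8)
The plan is to deduce (\ref{erv1}) from the general bound (\ref{erv}) of Theorem \ref{yyj2s} by committing to one explicit increasing function $v$. Following the heuristic stated just above the corollary, I would choose $v$ so that $v(0)=1$ --- which forces $v^{-1}(1)=0$ and makes (\ref{erv}) valid on the whole range $x\geq 0$ --- and so that the two exponents in (\ref{erv}) become asymptotically comparable as $x\to\infty$. Carrying out that matching singles out
$$v(x)=\left(1+\left(\frac{c(1-\alpha)^2\,T^{\frac{\alpha}{1-\alpha}}\,x^2}{\bar{\sigma}_{\alpha,T}^{\frac{4-2\alpha}{1-\alpha}}}\right)^{\frac{1-\alpha}{4-2\alpha}}\right)^{\frac{2}{1-\alpha}},$$
which is increasing, sends $[0,\infty)$ into $[1,\infty)$, and has $v(0)=1$.

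With this $v$ in hand, I would first rewrite the first exponent of (\ref{erv}). Abbreviating the inner bracket as $g(x)$, so that $v(x)=(1+g(x))^{\frac{2}{1-\alpha}}$, the point is the algebraic identity
$$\bar{\sigma}_{\alpha,T}^{\frac{2}{1-\alpha}}T^{-\frac{\alpha}{1-\alpha}}v(x)=T^{-\frac{\alpha}{2-\alpha}}\left(\bar{\sigma}_{\alpha,T}T^{-\frac{\alpha}{4-2\alpha}}+\left(c(1-\alpha)^2x^2\right)^{\frac{1-\alpha}{4-2\alpha}}\right)^{\frac{2}{1-\alpha}},$$
which one verifies by factoring $\bar{\sigma}_{\alpha,T}T^{-\frac{\alpha}{4-2\alpha}}$ out of the base on the right and collecting powers: every power of $\bar{\sigma}_{\alpha,T}$ cancels, and the powers of $T$ combine into the single exponent $-\frac{\alpha}{1-\alpha}$. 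The right-hand side is exactly the denominator inside the exponential of (\ref{erv1}).

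Next I would show the second exponential in (\ref{erv}) is dominated by the first, which converts the sum of the two terms into the clean factor $2$. Writing $E_1$ and $E_2$ for the two exponents of (\ref{erv}) and substituting $v=(1+g)^{\frac{2}{1-\alpha}}$ (so that $v^{\frac{1-\alpha}{2}}-1=g$), the defining relation $g^{\frac{4-2\alpha}{1-\alpha}}=\frac{c(1-\alpha)^2T^{\frac{\alpha}{1-\alpha}}x^2}{\bar{\sigma}_{\alpha,T}^{\frac{4-2\alpha}{1-\alpha}}}$ lets me eliminate $x^2$ and reduce the comparison to $\frac{E_2}{E_1}=\left(1+\frac{1}{g}\right)^{\frac{2}{1-\alpha}}\geq 1$. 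Hence $e^{-E_2}\leq e^{-E_1}$, the right-hand side of (\ref{erv}) is at most $2e^{-E_1}$, and combining with the identity above yields (\ref{erv1}). For the limit, I would take logarithms in (\ref{erv1}), use $g(x)\to\infty$ so that $\bar{\sigma}_{\alpha,T}T^{-\frac{\alpha}{4-2\alpha}}$ becomes negligible against $(c(1-\alpha)^2x^2)^{\frac{1-\alpha}{4-2\alpha}}$, and read off the leading term, whose order is $x^{\frac{2-2\alpha}{2-\alpha}}$ and whose coefficient simplifies (the powers of $\bar{\sigma}_{\alpha,T}$ again cancelling) to the claimed $-\frac{1}{2}c^{-\frac{1}{2-\alpha}}(1-\alpha)^{-\frac{2}{2-\alpha}}T^{\frac{\alpha}{2-\alpha}}$.

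I expect the genuine difficulty to lie not in any of these verifications --- they are elementary once $v$ is fixed --- but in producing the correct $v$ in the first place: one has to reverse-engineer the precise exponents of $\bar{\sigma}_{\alpha,T}$ and $T$ sitting inside $g$ (namely $\bar{\sigma}_{\alpha,T}^{-\frac{4-2\alpha}{1-\alpha}}$ and $T^{\frac{\alpha}{1-\alpha}}$) so that the displayed identity holds exactly and $g(0)=0$. The H\"older factor $T^{-\frac{\alpha}{1-\alpha}}$ inherited from Theorem \ref{yyj2s} is the bookkeeping one must track most carefully, since it is precisely what distinguishes this estimate from the $\alpha\in[0,1)$ case of Corollary \ref{ryyj}.
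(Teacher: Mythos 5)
Your proposal is correct and follows exactly the paper's route: the paper's proof consists of the single line ``Follows directly from (\ref{erv}) with $v(x)=\bigl(1+\bigl(\tfrac{c(1-\alpha)^2x^2}{\bar{\sigma}_{\alpha,T}^{\frac{4-2\alpha}{1-\alpha}}T^{-\frac{\alpha}{1-\alpha}}}\bigr)^{\frac{1-\alpha}{4-2\alpha}}\bigr)^{\frac{2}{1-\alpha}}$,'' which is precisely the $v$ you constructed (written with $T^{-\frac{\alpha}{1-\alpha}}$ in the denominator instead of $T^{\frac{\alpha}{1-\alpha}}$ in the numerator). Your write-up in fact supplies the details the paper omits --- the algebraic identity matching the first exponent to the denominator of (\ref{erv1}), the verification $E_2/E_1=(1+1/g)^{\frac{2}{1-\alpha}}\geq 1$ that justifies the factor $2$, and the asymptotic computation for the limit --- all of which check out.
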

\begin{proof}Follows directly from (\ref{erv}) with $v(x)=\left(1+\left(\frac{c(1-\alpha)^2x^2}{\bar{\sigma}_{\alpha,T}^{\frac{4-2\alpha}{1-\alpha}}T^{-\frac{\alpha}{1-\alpha}}}
\right)^{\frac{1-\alpha}{4-2\alpha}}\right)^{\frac{2}{1-\alpha}},x\geq 0.$
\end{proof}
In many applications, the random variables of interest are the form $\int_0^th(s,X_s)dB_s$ for some function $h$ and for some stochastic process $(X_t)_{t\in [0,T]}.$ We have the following.
\begin{cor}Let $X=(X_t)_{t\in [0,T]}$ be an $\mathbb{F}$-adapted stochastic process verifying the following hypotheses: $X_t\in \mathbb{D}^{1,2}$ for each $t\in [0,T]$ and
\begin{equation}\label{io2f}
|D_rX_s|\leq k(s,r),\,\,0\leq r\leq s\leq T,
\end{equation}
 for some integrable square deterministic function $k:[0,T]^2\to [0,\infty).$ Also let $f:[0,T]\times \mathbb{R}\to \mathbb{R}$ be such that $\frac{\partial f(t,x)}{\partial x}$ exists and
\begin{equation}\label{smv}
\bigg|\frac{\partial f(t,x)}{\partial x}\bigg|\leq L|f(t,x)|^\alpha\,\,\forall\,t\in[0,T],x\in \mathbb{R}
\end{equation}
for some $L>0$ and $\alpha\in (-\infty,1].$ Assume, in addition, that $\frac{\partial f(t,x)}{\partial x}$  has subexponential growth with respect to $x.$ Then, for any function $h$ satisfying $|h(t,x)|\leq |f(t,x)|\,\forall\,t\in[0,T]$ and $x\in \mathbb{R},$ we have

\noindent (i) If $\alpha<0,$ then
\begin{multline}\label{e6rv1}
P\left(\sup\limits_{0\leq t\leq T}\int_0^th(s,X_s)dB_s>x\right)\\\leq 2\exp\left(-\frac{x^2}{2T^{-\frac{\alpha}{2-\alpha}}
\left(\bar{\sigma}_{\alpha,T}T^{-\frac{\alpha}{4-2\alpha}}+\left(c(1-\alpha)^2x^2\right)^{\frac{1-\alpha}{4-2\alpha}}\right)^{\frac{2}{1-\alpha}}}\right),\,x\geq 0,
\end{multline}
where $c:=L^2\int_0^T\int_0^s k^2(s,r)drds$ and $\bar{\sigma}_{\alpha,T}^2:=\int_0^TE|f(s,X_s)|^{2-2\alpha}ds.$

\noindent (ii) If $\alpha\in [0,1),$ then
 \begin{multline}\label{v3ttr}
P\left(\sup\limits_{0\leq t\leq T}\int_0^th(s,X_s)dB_s>x\right)\\\leq 2\exp\left(-\frac{x^2}{2\left(\bar{\sigma}_T^{1-\alpha}+\left(c(1-\alpha)^2x^2\right)^{\frac{1-\alpha}{4-2\alpha}}\right)^{\frac{2}{1-\alpha}}}\right),\,\,\,x\geq 0,
\end{multline}
where $c:=L^2\left(\int_0^T\left(\int_0^s k^2(s,r)dr\right)^{\frac{1}{1-\alpha}}ds\right)^{1-\alpha}$ and $\bar{\sigma}_{T}^2:=\int_0^TE|f(s,X_s)|^2ds.$

\noindent (iii) If $\alpha=1,$ then
\begin{multline}\label{e3ujk}
P\left(\sup\limits_{0\leq t\leq T}\int_0^th(s,X_s)dB_s>x\right)\leq \exp\left(-\frac{x^2\ln^2\left(\frac{\sqrt{c}}{\bar{\sigma}_T}x+e\right)}{2(\sqrt{c}\,x+\bar{\sigma}_T)^2}\right)
\\+\exp\left(-\frac{\left(\ln\left(\frac{\sqrt{c}}{\bar{\sigma}_T}x+1\right)-\ln\ln\left(\frac{\sqrt{c}}{\bar{\sigma}_T}x+e\right)\right)^2}{2c}\right),\,\,\,x\geq  0,
\end{multline}
where $c:=L^2\sup\limits_{0\leq s\leq T}\int_0^s k^2(s,r)dr$ and $\bar{\sigma}_{T}^2:=\int_0^TE|f(s,X_s)|^2ds.$

\end{cor}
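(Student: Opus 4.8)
The plan is to specialize the abstract results already proved: set $u_s := h(s,X_s)$ and $\bar{u}_s := f(s,X_s)$, and then verify that the hypotheses of Theorem \ref{yyj2s} (when $\alpha<0$) and of Theorem \ref{yyj} together with Corollary \ref{ryyj} (when $\alpha\in[0,1]$) hold with the constants $c$ and variances $\bar{\sigma}_{\alpha,T}^2,\bar{\sigma}_T^2$ stated in the three items. The domination hypothesis common to all cases is immediate: since $|h(t,x)|\leq|f(t,x)|$ for all $t,x$, we have $u_s^2\leq\bar{u}_s^2$ pointwise and hence $\int_0^Tu_t^2dt\leq\int_0^T\bar{u}_t^2dt$ a.s.

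The crucial preliminary step is to establish, for each $s$, that $\bar{u}_s=f(s,X_s)\in\mathbb{D}^{1,2}$ and that the chain rule
$$D_r\bar{u}_s=\frac{\partial f}{\partial x}(s,X_s)\,D_rX_s,\quad 0\leq r\leq s,$$
is valid. This is exactly where the subexponential-growth assumption on $\partial_x f$ is used: the elementary chain rule of Malliavin calculus requires a bounded derivative, so I would invoke its extension to $C^1$ functions whose derivative has subexponential growth acting on $X_s\in\mathbb{D}^{1,2}$, the growth condition furnishing the moment integrability needed for the approximating smooth functions to converge in $\mathbb{D}^{1,2}$. I expect this verification --- justifying membership in $\mathbb{D}^{1,2}$ and the chain rule for an unbounded $f$ --- to be the main technical obstacle; everything that follows is algebra. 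Once the chain rule is in hand, combining the Malliavin-derivative bound \eqref{io2f} with the growth condition \eqref{smv} gives the pointwise estimate
$$|D_r\bar{u}_s|^2\leq L^2|f(s,X_s)|^{2\alpha}k^2(s,r),\quad 0\leq r\leq s\leq T,$$
from which each case is settled by choosing how to absorb the factor $|f(s,X_s)|^{2\alpha}$.

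For $\alpha<0$, I integrate in $r$ to get $\int_0^s|D_r\bar{u}_s|^2dr\leq L^2|f(s,X_s)|^{2\alpha}\int_0^sk^2(s,r)dr$, so that the weight $|\bar{u}_s|^{-2\alpha}=|f(s,X_s)|^{-2\alpha}$ exactly cancels the factor $|f(s,X_s)|^{2\alpha}$, leaving $\int_0^T|\bar{u}_s|^{-2\alpha}\int_0^s|D_r\bar{u}_s|^2dr\,ds\leq L^2\int_0^T\int_0^sk^2(s,r)dr\,ds=c$. This is precisely condition \eqref{frb}, and since $\bar{\sigma}_{\alpha,T}^2=\int_0^TE|f(s,X_s)|^{2-2\alpha}ds$, inequality \eqref{e6rv1} follows directly from \eqref{erv1}. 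For $\alpha\in[0,1)$, the same integration yields $\int_0^T\int_0^s|D_r\bar{u}_s|^2dr\,ds\leq L^2\int_0^T|f(s,X_s)|^{2\alpha}\big(\int_0^sk^2(s,r)dr\big)ds$; applying H\"older's inequality with conjugate exponents $\frac1\alpha$ and $\frac1{1-\alpha}$ (the case $\alpha=0$ being trivial) bounds this by $c\big(\int_0^Tf(s,X_s)^2ds\big)^\alpha$ with $c=L^2\big(\int_0^T(\int_0^sk^2(s,r)dr)^{1/(1-\alpha)}ds\big)^{1-\alpha}$, which is exactly \eqref{cml}, so \eqref{v3ttr} follows from Corollary \ref{ryyj}(i).

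For $\alpha=1$, I bound $\int_0^sk^2(s,r)dr$ by its supremum over $s$, obtaining $\int_0^T\int_0^s|D_r\bar{u}_s|^2dr\,ds\leq L^2\big(\sup_{0\leq s\leq T}\int_0^sk^2(s,r)dr\big)\int_0^Tf(s,X_s)^2ds$, i.e.\ condition \eqref{cml} with $\alpha=1$ and $c=L^2\sup_{0\leq s\leq T}\int_0^sk^2(s,r)dr$; then \eqref{e3ujk} is read off from Corollary \ref{ryyj}(ii). In each case $\bar{\sigma}_T^2=\int_0^TE|f(s,X_s)|^2ds$ matches the variance appearing in the cited corollary, so no further estimation is required and the proof reduces entirely to the membership-and-chain-rule step flagged above.
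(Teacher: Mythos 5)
Your proposal is correct and follows essentially the same route as the paper: the same choices $u_s=h(s,X_s)$ and $\bar u_s=f(s,X_s)$, the same pointwise bound $|D_r\bar u_s|\le L\,k(s,r)\,|\bar u_s|^\alpha$ from the chain rule, and the same three case-by-case verifications of \eqref{frb} and \eqref{cml} (identity cancellation for $\alpha<0$, H\"older with exponents $\tfrac1\alpha,\tfrac1{1-\alpha}$ for $\alpha\in[0,1)$, supremum bound for $\alpha=1$) feeding into \eqref{erv1}, \eqref{vttr} and \eqref{eujk}. The one point where the paper is more precise than you are is the membership $f(s,X_s)\in\mathbb{D}^{1,2}$: membership $X_s\in\mathbb{D}^{1,2}$ alone does not furnish the moment integrability you invoke, so the paper first derives the Gaussian tail bound $P\left(|X_s-EX_s|\geq x\right)\leq 2\exp\left(-\frac{x^2}{2\int_0^s k^2(s,r)dr}\right)$ from \eqref{io2f} via Lemma \ref{kf98jk0}, and it is this concentration of $X_s$ combined with the subexponential growth of $\partial_x f$ that yields the moments needed for the chain rule.
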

\begin{proof} We consider the stochastic processes
$$\text{$u_s=h(s,X_s)$ and $\bar{u}_s=f(s,X_s),0\leq s\leq T.$}$$
The condition (\ref{io2f}) and Lemma \ref{kf98jk0} imply
$$P\left(|X_s-EX_s|\geq x\right)\leq 2\exp\left(-\frac{x^2}{2\int_0^sk^2(s,r)dr}\right)\,\,\,\forall\,\,0< s\leq T,x\geq 0.$$
Hence, under the required conditions on $f,$ it is easy to see that  $\bar{u}_s\in \mathbb{D}^{1,2}$ for each $0\leq s\leq T.$ Moreover, we have
\begin{equation}\label{iof}
|D_r\bar{u}_s|=\big|k(s,r)f'(s,X_s)\big|\leq L|k(s,r)||\bar{u}_s|^\alpha,\,\,0\leq r\leq s\leq T.
\end{equation}
When $\alpha<0,$ the relation (\ref{iof}) implies that
$$\int_0^T|\bar{u}_s|^{-2\alpha}\int_0^s |D_r\bar{u}_s|^2drds\leq  L^2\int_0^T\int_0^s k^2(s,r)drds\,\,a.s.
$$
Hence, the condition (\ref{frb}) is satisfied with $c=L^2\int_0^T\int_0^s k^2(s,r)drds$ and the concentration inequality (\ref{e6rv1}) follows from (\ref{erv1}).

When $\alpha\in [0,1),$ by H\"older's inequality,  the relation (\ref{iof}) implies that
\begin{align*}
\int_0^T\int_0^s |D_r\bar{u}_s|^2drds&\leq L^2\int_0^T\int_0^s k^2(s,r)|\bar{u}_s|^{2\alpha}drds\\
&\leq L^2\left(\int_0^T\left(\int_0^s k^2(s,r)dr\right)^{\frac{1}{1-\alpha}}ds\right)^{1-\alpha}\left(\int_0^T \bar{u}_s^2ds\right)^{\alpha}.
\end{align*}
So the condition (\ref{cml}) is satisfied with $c=L^2\left(\int_0^T\left(\int_0^s k^2(s,r)dr\right)^{\frac{1}{1-\alpha}}ds\right)^{1-\alpha},$ and hence, the concentration inequality (\ref{v3ttr}) follows from (\ref{vttr}).

Similarly, when $\alpha=1,$ we have
\begin{align*}
\int_0^T\int_0^s |D_r\bar{u}_s|^2drds&\leq L^2\int_0^T\int_0^s k^2(s,r)|\bar{u}_s|^2drds\\
&\leq L^2\sup\limits_{0\leq s\leq T}\int_0^s k^2(s,r)dr\int_0^T|\bar{u}_s|^2ds.
\end{align*}
Thus the condition (\ref{cml}) is satisfied with $c= L^2\sup\limits_{0\leq s\leq T}\int_0^s k^2(s,r)dr$ and the concentration inequality (\ref{e3ujk}) follows from  (\ref{eujk}).

The proof of the proposition is complete.
\end{proof}
We end this section by providing a computational example.
\begin{exam}We consider the stochastic process $X_t=\max\limits_{0\leq u\leq t}B_u,0\leq t\leq 1.$ This process satisfies the condition (\ref{io2f}) because $|D_rX_s|\leq 1=:k(s,r),\,\,0\leq r\leq s\leq 1.$

\noindent(i) The function $f(t,x)=e^x,x\in \mathbb{R}$ satisfies the condition (\ref{smv}) with $\alpha=L=1.$ So, by using the inequality (\ref{e3ujk}) with $h(t,x)=e^x,$ we obtain
\begin{multline}
P\left(\sup\limits_{0\leq t\leq 1}\int_0^te^{\max\limits_{0\leq u\leq s}B_u}dB_s>x\right)\leq \exp\left(-\frac{x^2\ln^2\left(\frac{x}{\sqrt{2}\bar{\sigma}_T}+e\right)}{2(x+\bar{\sigma}_T)^2}\right)
\\+\exp\left(-\frac{1}{2}\left(\ln\left(\frac{x}{\bar{\sigma}_T}+1\right)-\ln\ln\left(\frac{x}{\bar{\sigma}_T}+e\right)\right)^2\right),\,\,\,x\geq  0,
\end{multline}
where $\bar{\sigma}_{T}^2:=\int_0^1E\big[e^{2\max\limits_{0\leq u\leq s}B_u}\big]ds.$

\noindent(ii) The function $f(t,x)=x^2,x\in \mathbb{R}$ satisfies the condition (\ref{smv}) with $\alpha=\frac{1}{2}$ and $L=2.$ So, by using the inequality (\ref{v3ttr}) with $h(t,x)=x^2,$ we obtain
$$
P\left(\sup\limits_{0\leq t\leq 1}\int_0^t(\max\limits_{0\leq u\leq s}B_u)^2dB_s>x\right)\leq 2\exp\left(-\frac{x^2}{2\left(2^{-\frac{1}{4}}+3^{-\frac{1}{12}}x^{\frac{1}{3}}\right)^{4}}\right),\,\,\,x\geq 0.
$$

\noindent(iii) For every $\varepsilon>0,$ the function $f(t,x)=\sqrt{x+\varepsilon},x\in \mathbb{R}_+$ satisfies the condition (\ref{smv}) with $\alpha=-1$ and $L=1.$ So, by using the inequality (\ref{e6rv1}) with $h(t,x)=\sqrt{x},$ we obtain
\begin{equation*}
P\left(\sup\limits_{0\leq t\leq 1}\int_0^t\sqrt{\max\limits_{0\leq u\leq s}B_u}dB_s>x\right)\leq 2\exp\left(-\frac{x^2}{2\bar{\sigma}_{\alpha,T}+2^{\frac{4}{3}}x^{\frac{2}{3}}}\right),\,x\geq 0,
\end{equation*}
where $\bar{\sigma}_{\alpha,T}^2:=\int_0^1E|\max\limits_{0\leq u\leq s}B_u+\varepsilon|^2ds.$ Letting $\varepsilon\to 0$ yields
$$P\left(\sup\limits_{0\leq t\leq 1}\int_0^t\sqrt{\max\limits_{0\leq u\leq s}B_u}dB_s>x\right)\leq 2\exp\left(-\frac{x^2}{\sqrt{2}+2^{\frac{4}{3}}x^{\frac{2}{3}}}\right),\,x\geq 0.$$
\end{exam}

\section{Applications}\label{gvs}
In this section, we provide some applications to illustrate the results obtained in the previous section.
\subsection{Gaussian functionals}
Let $Z=(Z_1,\cdots,Z_n)$ be an $n$-dimensional standard Gaussian vector. We consider Gaussian functionals of the form
$$F=f(Z_1,\cdots,Z_n),$$
where $f:\mathbb{R}^n\to \mathbb{R}$ is a deterministic function. It is well known that, when $f$ is a Lipschitz function with Lipschitz constant $L,$ the following Gaussian concentration inequality holds:
$$P(|F-E[F]|>x)\leq 2e^{-\frac{x^2}{2L^2}},\,\,\,x\geq 0.$$
This is one of the most fundamental inequalities in the theory of Gaussian processes and its proof can be found in many textbooks, see e.g. \cite{Adler2007}. Here our new concentration inequalities allow us to handle the class of functions with bounded partial derivatives of second order. We have the following exponential concentration inequality.
\begin{prop}Let $f:\mathbb{R}^n\to \mathbb{R}$ be a twice differentiable function satisfying
$$ \lambda_{ik}:=\sup\limits_{x\in \mathbb{R}^n}\big|\frac{\partial^2f(x)}{\partial x_k\partial x_i}\big|<\infty\,\,\forall\,1\leq i\leq k\leq n.$$
Then, it holds that
\begin{equation}\label{uao}
P\left(|F-E[F]|>x\right)\leq 4\exp\left(-\frac{2x^2}{\left(\sigma+\sqrt{\sigma^2+4\sqrt{c}\,x}\right)^2}\right),\,\,\,x\geq  0,
\end{equation}
where $\sigma^2={\rm Var}(F)$ and $c=\sum\limits_{k=1}^{n}\sum\limits_{i=1}^{k-1}\lambda_{ik}^2+\frac{1}{2}\sum\limits_{k=1}^{n}\lambda_{kk}^2.$
\end{prop}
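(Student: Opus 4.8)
The plan is to realize the Gaussian vector inside a Brownian filtration and reduce the statement to Corollary \ref{fdl}. I would work on $[0,T]$ with $T=n$ and set $Z_i:=B_i-B_{i-1}$ for $1\le i\le n$; these are i.i.d.\ standard Gaussians, so $(Z_1,\dots,Z_n)$ has the correct law and $F=f(Z_1,\dots,Z_n)$ becomes a functional of $B$. Since the second-order partials of $f$ are bounded, $f$ has at most quadratic growth and $\nabla f$ at most linear growth, so $F\in\mathbb{D}^{1,2}$ with $D_sF=\sum_{k=1}^n\partial_kf(Z)\,\ind_{[k-1,k]}(s)$. The Clark--Ocone formula then gives $F-E[F]=\int_0^n u_s\,dB_s$ with the adapted integrand $u_s=E[D_sF\mid\mathcal F_s]$. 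I would take $\bar u:=u$: the comparison $\int_0^n u_s^2\,ds\le\int_0^n\bar u_s^2\,ds$ holds with equality, and by the It\^o isometry $\bar\sigma_n^2=\int_0^n E[u_s^2]\,ds=E[(F-E[F])^2]=\sigma^2$, exactly the $\sigma$ appearing in the claimed bound.

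The heart of the proof is to verify hypothesis (\ref{crml}), i.e.\ that $\int_0^n\int_0^s|D_r\bar u_s|^2\,dr\,ds\le c$ with the stated constant. Fix $s\in[k-1,k]$. Then $u_s=E[\partial_kf(Z)\mid\mathcal F_s]$ is a smooth function $g_s$ of the observed variables $Z_1,\dots,Z_{k-1}$ and $W:=B_s-B_{k-1}$, obtained by averaging out the independent future increments $B_k-B_s, Z_{k+1},\dots,Z_n$. Differentiating, for $r\in[i-1,i]$ with $i\le k-1$ one gets $D_ru_s=\partial_{z_i}g_s=E[\partial_i\partial_kf(Z)\mid\mathcal F_s]$, of modulus at most $\lambda_{ik}$, while for $r\in[k-1,s]$ one gets $D_ru_s=\partial_wg_s=E[\partial_k\partial_kf(Z)\mid\mathcal F_s]$, of modulus at most $\lambda_{kk}$. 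Integrating in $r$ over $[0,s]$ uses $\int_{i-1}^i dr=1$ for each full unit block and $\int_{k-1}^s dr=s-k+1$ for the partial block, giving $\int_0^s|D_ru_s|^2\,dr\le\sum_{i=1}^{k-1}\lambda_{ik}^2+(s-k+1)\lambda_{kk}^2$. Integrating in $s$ over each $[k-1,k]$ and using $\int_{k-1}^k(s-k+1)\,ds=\tfrac12$ yields the per-block contribution $\sum_{i=1}^{k-1}\lambda_{ik}^2+\tfrac12\lambda_{kk}^2$, and summing over $k$ reproduces precisely $c=\sum_{k=1}^n\sum_{i=1}^{k-1}\lambda_{ik}^2+\tfrac12\sum_{k=1}^n\lambda_{kk}^2$.

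With (\ref{crml}) verified and $\bar\sigma_n=\sigma$ in hand, Corollary \ref{fdl} applied to $u$ gives $P\left(\sup_{0\le t\le n}\int_0^t u_s\,dB_s>x\right)\le 2\exp\left(-\frac{2x^2}{\left(\sigma+\sqrt{\sigma^2+4\sqrt{c}\,x}\right)^2}\right)$, and since $F-E[F]=\int_0^n u_s\,dB_s\le\sup_{0\le t\le n}\int_0^t u_s\,dB_s$, the same bound controls $P(F-E[F]>x)$. Applying the identical argument to $-F$ (equivalently to $-u$) controls $P(E[F]-F>x)$ by the same quantity, and adding the two tails produces the factor $4$ in (\ref{uao}).

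The step I expect to be delicate is the differentiation of the conditional expectation $u_s=E[\partial_kf(Z)\mid\mathcal F_s]$: one must justify passing $D_r$ through the conditioning and differentiating under the Gaussian average, and confirm $u_s\in\mathbb{D}^{1,2}$. This is where the twice-differentiability of $f$, the uniform bounds $\lambda_{ik}$, and the assumed subexponential growth of the derivatives enter. The cleanest route is to observe that conditioning convolves $\partial_kf$ with Gaussian kernels, so $g_s$ is smooth with $\partial_{z_i}g_s$ and $\partial_wg_s$ equal to the averaged second partials, each uniformly bounded; this simultaneously places $u_s$ in $\mathbb{D}^{1,2}$ and yields the pointwise derivative bounds used above. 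A routine approximation of $f$ by smooth functions sharing the same second-derivative bounds can be invoked if one prefers to avoid differentiating $f$ twice directly.
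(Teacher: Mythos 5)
Your proposal is correct and follows essentially the same route as the paper's proof: realizing $Z_i=B_i-B_{i-1}$, using Clark--Ocone to write $F-E[F]=\int_0^n E[D_sF\mid\mathcal F_s]\,dB_s$, taking $\bar u=u$, bounding $\int_0^n\int_0^s|D_r u_s|^2\,dr\,ds$ blockwise by $\sum_{k}\sum_{i<k}\lambda_{ik}^2+\tfrac12\sum_k\lambda_{kk}^2$ via the commutation of $D$ with conditional expectation, and applying Corollary \ref{fdl} to each of the two tails. Your extra care in justifying the differentiation of the conditional expectation (via the Gaussian-convolution structure) is a welcome elaboration of a step the paper takes for granted, and your parenthetical appeal to ``assumed subexponential growth'' is harmless since bounded second partials already give linear growth of $\nabla f$.
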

\begin{proof}Without loss of generality, we may write $Z_k=B_k-B_{k-1},1\leq k\leq n.$ Then, by using the representation $F=f(B_1,B_2-B_1,\cdots,B_n-B_{n-1}),$ we deduce
\begin{align*}
\int_0^n\int_0^s |D_rE[D_sF|\mathcal{F}_s]|^2drds
&=\sum\limits_{k=1}^{n}\int_{k-1}^k\int_0^s |D_rE[\partial_kF|\mathcal{F}_s]|^2drds\\
&=\sum\limits_{k=1}^{n}\int_{k-1}^k\left(\sum\limits_{i=1}^{k-1}|E[\partial_{ik}F|\mathcal{F}_s]|^2+\int_{k-1}^s|E[\partial_{kk}F|\mathcal{F}_s]|^2 dr\right)ds\\
&\leq \sum\limits_{k=1}^{n}\int_{k-1}^k\left(\sum\limits_{i=1}^{k-1}\lambda_{ik}^2+\int_{k-1}^s\lambda_{kk}^2 dr\right)ds\\
&=\sum\limits_{k=1}^{n}\sum\limits_{i=1}^{k-1}\lambda_{ik}^2+\frac{1}{2}\sum\limits_{k=1}^{n}\lambda_{kk}^2.
\end{align*}
Here, to simplify notation, we used in the following abbreviations $\partial_kF=\frac{\partial f}{\partial x_k}(B_1,B_2-B_1,\cdots,B_n-B_{n-1})$ and $\partial_{ik}F=\frac{\partial^2f}{\partial x_k\partial x_i}(B_1,B_2-B_1,\cdots,B_n-B_{n-1}).$ On the other hand, by the Clark-Ocone formula, we have
$$F-E[F]=\int_0^n u_sdB_s,$$
where $u_s:=E[D_sF|\mathcal{F}_s],0\leq s\leq n.$ Furthermore, the stochastic process $\bar{u}_s=u_s$ satisfies $\int_0^nE[\bar{u}_s^2]ds={\rm Var}(F)=:\sigma^2$ and
$$\int_0^n\int_0^s |D_r\bar{u}_s|^2drds=\int_0^n\int_0^s |D_rE[D_sF|\mathcal{F}_s]|^2drds\leq \sum\limits_{k=1}^{n}\sum\limits_{i=1}^{k-1}\lambda_{ik}^2+\frac{1}{2}\sum\limits_{k=1}^{n}\lambda_{kk}^2=:c.$$
We now apply Corollary \ref{fdl} and we obtain
\begin{align*}
P(F-E[F]>x)&\leq P\left(\sup\limits_{0\leq t\leq n}\int_0^tu_sdB_s>x\right)\\
&\leq 2\exp\left(-\frac{2x^2}{\left(\sigma+\sqrt{\sigma^2+4\sqrt{c}\,x}\right)^2}\right),\,\,\,x\geq  0.
\end{align*}
Similarly, we also have
$$P(-F+E[F]>x)\leq 2\exp\left(-\frac{2x^2}{\left(\sigma+\sqrt{\sigma^2+4\sqrt{c}\,x}\right)^2}\right),\,\,\,x\geq  0.$$
So the desired inequality (\ref{uao}) follows.
\end{proof}

\subsection{Supremum of double Wiener-It\^o integrals}
Let $g$ be an integrable square symmetric function on $[0,T]^2.$ We consider the double Wiener-It\^o integrals
$$I_t(g):=\int_{[0,t]^2}g(s,\theta)dB_\theta dB_s,\,\,0\leq t\leq T.$$
The tail behaviour of multiple Wiener-It\^o integrals has been well studied, see e.g. \cite{Lata2006,Major2005}. Particularly, for the double Wiener-It\^o integrals, Theorem 4.1 in \cite{Major2005} provides us the concentration bound
$$P\left(|I_T(g)|>x\right)\leq C\exp\left(-\frac{x}{2\|g\|_{L^2([0,T]^2)}}\right),\,\,\,x\geq  0,$$
where $C$ is an absolute constant. For the supremum of double Wiener-It\^o integrals, the following tail estimate  seems to be new.
\begin{prop} It holds that
$$P\big(\sup\limits_{0\leq t\leq T}I_t(g)>x\big)\leq 2\exp\left(-\frac{x^2}{\left(\|g\|_{L^2([0,T]^2)}+\sqrt{\|g\|_{L^2([0,T]^2)}^2+\sqrt{8}\|g\|_{L^2([0,T]^2)}\,x}\right)^2}\right),\,\,\,x\geq  0.$$
\end{prop}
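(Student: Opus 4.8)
The plan is to reduce the statement to Corollary~\ref{fdl} by writing the running double integral as an ordinary It\^o integral whose integrand lies in the first Wiener chaos. Since $g$ is symmetric, the double Wiener-It\^o integral admits the iterated representation
\[
I_t(g)=2\int_0^t\left(\int_0^s g(s,\theta)dB_\theta\right)dB_s=\int_0^t u_sdB_s,\qquad u_s:=2\int_0^s g(s,\theta)dB_\theta .
\]
The process $u=(u_s)_{s\in[0,T]}$ is $\mathbb{F}$-adapted and square integrable, so $(I_t(g))_{t\in[0,T]}$ is a martingale and its running supremum is exactly the object handled by our framework. I would take $\bar{u}_s=u_s$, so that the comparison hypothesis $\int_0^Tu_t^2dt\leq\int_0^T\bar{u}_t^2dt$ of Corollary~\ref{fdl} holds with equality.

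Next I would compute the two quantities entering the bound. By the It\^o isometry together with the symmetry of $g$,
\[
\bar{\sigma}_T^2=\int_0^TE|u_s|^2ds=4\int_0^T\int_0^s g(s,\theta)^2\,d\theta\,ds=2\|g\|_{L^2([0,T]^2)}^2 .
\]
For the Malliavin derivative, $u_s$ is a first-chaos element, hence $u_s\in\mathbb{D}^{1,2}$ with the deterministic derivative $D_ru_s=2g(s,r)\ind_{[0,s]}(r)$. Consequently
\[
\int_0^T\int_0^s |D_r\bar{u}_s|^2\,dr\,ds=4\int_0^T\int_0^s g(s,r)^2\,dr\,ds=2\|g\|_{L^2([0,T]^2)}^2=:c ,
\]
which verifies hypothesis (\ref{crml}) with a constant that is in fact deterministic.

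Finally I would insert $\bar{\sigma}_T=\sqrt{2}\,\|g\|_{L^2([0,T]^2)}$ and $\sqrt{c}=\sqrt{2}\,\|g\|_{L^2([0,T]^2)}$ into the conclusion (\ref{ydlj}). Writing $a:=\|g\|_{L^2([0,T]^2)}$ and factoring $\sqrt{2}$ out of the denominator gives
\[
\bar{\sigma}_T+\sqrt{\bar{\sigma}_T^2+4\sqrt{c}\,x}=\sqrt{2}\,a+\sqrt{2}\sqrt{a^2+2\sqrt{2}\,a\,x}=\sqrt{2}\left(a+\sqrt{a^2+2\sqrt{2}\,a\,x}\right),
\]
so that the factor $2$ in the numerator of (\ref{ydlj}) cancels the $2$ produced by squaring this expression; using $2\sqrt{2}=\sqrt{8}$, the exponent becomes exactly $-x^2\big/\big(a+\sqrt{a^2+\sqrt{8}\,a\,x}\big)^2$, which is the claimed bound.

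The individual steps are all routine; the only places requiring a little care are the classical iterated-integral representation of $I_t(g)$ and the membership $u_s\in\mathbb{D}^{1,2}$ with its explicit derivative for first-chaos integrands, and the concluding algebraic simplification, where one must correctly factor the $\sqrt{2}$ and recognize $2\sqrt{2}=\sqrt{8}$ in order to land on the precise constant.
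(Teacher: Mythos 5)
Your proof is correct and follows essentially the same route as the paper: both reduce $I_t(g)$ to an It\^o integral via the iterated representation, verify hypothesis (\ref{crml}) for the first-chaos integrand, and apply Corollary \ref{fdl}. The only cosmetic difference is that you absorb the factor $2$ into the integrand $u_s=2\int_0^s g(s,\theta)dB_\theta$, whereas the paper keeps $u_s=\int_0^s g(s,\theta)dB_\theta$ and instead applies (\ref{ydlj}) at the threshold $x/2$; the resulting algebra is identical.
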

\begin{proof}
We consider the processes
$$\bar{u}_s=u_s=\int_0^sg(s,\theta)dB_\theta,\,\,0\leq s\leq T.$$
In view of Corollary \ref{fdl}, we have $\bar{\sigma}_T^2:=\int_0^TE|\bar{u}_s|^2ds=\int_0^T\int_0^sg^2(s,\theta)d\theta ds=\frac{1}{2}\|g\|_{L^2([0,T]^2)}^2$ and
$$\int_0^T\int_0^s |D_r\bar{u}_s|^2drds=\int_0^T\int_0^sg^2(s,r)dr ds=\frac{1}{2}\|g\|_{L^2([0,T]^2)}^2=:c\,\,a.s.$$
So, by using the bound (\ref{ydlj}), we deduce
\begin{align*}
P\big(\sup\limits_{0\leq t\leq T}I_t(g)>x\big)&=P\left(\sup\limits_{0\leq t\leq T}\int_0^t\int_0^sg(s,\theta)dB_\theta dB_s>x/2\right)\\
&\leq 2\exp\left(-\frac{x^2}{\left(\|g\|_{L^2([0,T]^2)}+\sqrt{\|g\|_{L^2([0,T]^2)}^2+\sqrt{8}\|g\|_{L^2([0,T]^2)}\,x}\right)^2}\right),\,\,\,x\geq  0.
\end{align*}
This completes the proof of the proposition.
\end{proof}
\begin{rem}
 It was brought to our attention (by Professor Christian Houdr\'e, personal communications) that we can use the results from \cite{Breton2007} to derive concentration inequalities for the supremum of double Wiener-It\^o integrals.
\end{rem}
\subsection{Supremum of  Cox-Ingersoll-Ross process}

The Cox-Ingersoll-Ross process is very popular in financial mathematics. It is the solution to the following equation
\begin{equation}\label{44kfo}
X_t=x_0+\int_0^t (a-bX_s)ds+\int_0^t \sigma\sqrt{X_s}dB_s,\,\,t\in [0,T],
\end{equation}
where the initial condition $x_0>0,$ the parameters $a,b,\sigma$ are positive constants. We assume that $2a>\sigma^2,$ which, by the Feller test, ensures the existence of a unique positive solution to the equation (\ref{44kfo}) (see, e.g. Chapter 5 in \cite{Karatzas1991}).

The asymptotic behavior of $P(\sup\limits_{0\leq t\leq T}X_t>x)$ as $x\to\infty$ has been recently discussed in \cite{Gerhold2022}. Here we obtain the following non-asymptotic estimate.
\begin{prop}It holds that
\begin{equation}
P\left(\sup\limits_{0\leq t\leq T}e^{bt}(X_t-E[X_t])>x\right)\leq 2\exp\left(-\frac{2x^2}{\left(\bar{\sigma}_T+\sqrt{\bar{\sigma}_T^2+\frac{ \sqrt{2}\sigma^2}{b}(e^{bT}-1)\,x}\right)^2}\right),\,\,\,x\geq  0,
\end{equation}
where $\bar{\sigma}_T^2:=\sigma^2\left(\frac{x_0}{b}(e^{bT}-1)+\frac{a}{2b^2}(e^{bT}-1)^2\right).$
\end{prop}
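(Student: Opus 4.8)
The plan is to write $e^{bt}(X_t-E[X_t])$ as an It\^o integral and then invoke Corollary \ref{fdl}. First I would apply It\^o's formula to $e^{bt}X_t$; the mean-reversion drift cancels, giving
\[
e^{bt}X_t=x_0+\frac{a}{b}(e^{bt}-1)+\int_0^t\sigma e^{bs}\sqrt{X_s}\,dB_s.
\]
Because $2a>\sigma^2$ forces $X_s>0$ a.s.\ and $X$ has integrable moments, the stochastic integral is a genuine martingale, so taking expectations yields $E[e^{bt}X_t]=x_0+\frac{a}{b}(e^{bt}-1)$ and hence
\[
e^{bt}(X_t-E[X_t])=\int_0^t u_s\,dB_s,\qquad u_s:=\sigma e^{bs}\sqrt{X_s}.
\]
Setting $\bar u_s=u_s$, the comparison hypothesis $\int_0^Tu_t^2dt\le\int_0^T\bar u_t^2dt$ holds with equality, and the event in the statement is exactly $\{\sup_{0\le t\le T}\int_0^t u_s\,dB_s>x\}$.

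Next I would identify $\bar\sigma_T^2$. Since the previous identity gives $E[X_s]=e^{-bs}x_0+\frac{a}{b}(1-e^{-bs})$, a direct integration produces
\[
\bar\sigma_T^2=\int_0^TE[\bar u_s^2]\,ds=\sigma^2\int_0^Te^{2bs}E[X_s]\,ds=\sigma^2\Big(\frac{x_0}{b}(e^{bT}-1)+\frac{a}{2b^2}(e^{bT}-1)^2\Big),
\]
which is the constant appearing in the statement.

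The heart of the argument is the verification of condition (\ref{crml}). I would pass to the transformed process $V_s:=\sqrt{X_s}$, which by It\^o's formula solves an SDE with a \emph{constant} diffusion coefficient,
\[
dV_s=\Big(\frac{a-\sigma^2/4}{2V_s}-\frac{b}{2}V_s\Big)ds+\frac{\sigma}{2}\,dB_s.
\]
Differentiating this SDE in the Malliavin sense, the constant diffusion coefficient annihilates the stochastic-integral term, so $D_rV_s$ (for $s\ge r$) solves a \emph{pathwise} linear ODE with initial value $\sigma/2$, giving
\[
D_rV_s=\frac{\sigma}{2}\exp\Big(-\int_r^s\Big(\frac{a-\sigma^2/4}{2V_\theta^2}+\frac{b}{2}\Big)d\theta\Big).
\]
Since $2a>\sigma^2$ makes the integrand at least $b/2$, this yields the clean a.s.\ bound $0<D_rV_s\le\frac{\sigma}{2}e^{-b(s-r)/2}$, whence $|D_r\bar u_s|=\sigma e^{bs}D_rV_s\le\frac{\sigma^2}{2}e^{b(s+r)/2}$. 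Integrating,
\[
\int_0^T\int_0^s|D_r\bar u_s|^2\,dr\,ds\le\frac{\sigma^4}{4}\int_0^Te^{bs}\Big(\int_0^se^{br}dr\Big)ds=\frac{\sigma^4}{8b^2}(e^{bT}-1)^2=:c\quad a.s.
\]
Applying Corollary \ref{fdl} with this $c$, and noting that $4\sqrt{c}=\frac{\sqrt2\,\sigma^2}{b}(e^{bT}-1)$, delivers exactly the asserted inequality.

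I expect the main obstacle to be the rigorous justification of the Malliavin differentiability of $\sqrt{X_s}$ and of the differentiated-SDE representation of $D_rV_s$: the square-root transformation has a drift that blows up at the origin, so the standard smooth- (or Lipschitz-) coefficient theory does not apply directly. This step relies essentially on the strict positivity of $X$ guaranteed by the Feller condition $2a>\sigma^2$, together with a localization/approximation argument on compact subsets of $(0,\infty)$; once differentiability is secured, the remaining computations are the elementary ones above.
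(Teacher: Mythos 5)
Your proposal is correct and follows essentially the same route as the paper: the same It\^o-integral representation $e^{bt}(X_t-E[X_t])=\int_0^t\sigma e^{bs}\sqrt{X_s}\,dB_s$, the same choice $\bar u_s=\sigma e^{bs}\sqrt{X_s}$, the same exponential formula and bound $D_r\sqrt{X_s}\le\frac{\sigma}{2}e^{-b(s-r)/2}$, the same constants $c=\frac{\sigma^4}{8b^2}(e^{bT}-1)^2$ and $\bar\sigma_T^2$, and the same final application of Corollary \ref{fdl}. The only difference concerns the step you flagged as the main obstacle: the paper resolves the Malliavin differentiability of $\sqrt{X_t}$ and the formula for its derivative not by a localization argument but by citing Lemma 4.1 of Al\`os--Ewald \cite{Alos2008}, which provides exactly the statement you derived formally.
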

\begin{proof}
It is known from Lemma 4.1 in \cite{Alos2008} that the random variable $v_t:=\sqrt{X_t}$ is Malliavin differentiable and its derivative is given by
$$D_rv_t=\frac{ \sigma}{2}\exp\left[\int_r^t \left(-\left(\frac{a}{2}-\frac{\sigma^2}{8}\right)\frac{1}{v^2_s}-\frac{b}{2}\right)ds\right]\leq \frac{ \sigma}{2}e^{-\frac{b}{2}(t-r)},\,\,0\leq r\leq t\leq T.$$
We observe that $E[X_t]=x_0e^{-bt}+\frac{a}{b}(1-e^{-bt}),$ and
$$e^{bt}(X_t-E[X_t])=\int_0^t\sigma e^{bs}\sqrt{X_s}dB_s=\int_0^t\sigma e^{bs}v_sdB_s,\,\,t\in [0,T],$$
and hence,
\begin{equation}\label{8ts}
P\left(\sup\limits_{0\leq t\leq T}e^{bt}(X_t-E[X_t])>x\right)=P\left(\sup\limits_{0\leq t\leq T}\int_0^t\sigma e^{bs}v_sdB_s>x\right),\,\,x\geq 0.
\end{equation}
We consider the stochastic process $\bar{u}_s:=\sigma e^{bs}v_s,0\leq s\leq T.$ We have
$$\int_0^T\int_0^s |D_r\bar{u}_s|^2drds\leq \int_0^T\int_0^s\frac{ \sigma^4}{4} e^{2bs}e^{-b(s-r)}drds
=\frac{ \sigma^4}{8b^2}(e^{bT}-1)^2\,\,a.s.$$
On the other hand, we also have
$$\bar{\sigma}_T^2:=\int_0^TE[\bar{u}_s^2]ds=\int_0^T\sigma^2 e^{2bs}E[X_s]ds=\sigma^2\left(\frac{x_0}{b}(e^{bT}-1)+\frac{a}{2b^2}(e^{bT}-1)^2\right).$$
We now apply Corollary \ref{fdl} to $u_s=\sigma e^{bs}v_s,0\leq s\leq T,$ and we obtain
$$P\left(\sup\limits_{0\leq t\leq T}\int_0^tu_sdB_s>x\right)\leq 2\exp\left(-\frac{2x^2}{\left(\bar{\sigma}_T+\sqrt{\bar{\sigma}_T^2+\frac{ \sqrt{2}\sigma^2}{b}(e^{bT}-1)\,x}\right)^2}\right),\,\,\,x\geq  0.$$
So, recalling (\ref{8ts}), the proof of the proposition is complete.
\end{proof}


\noindent {\bf Acknowledgments.}  The authors would like to thank the anonymous referees for valuable comments which led to the improvement of this work.

\end{document}